\documentclass[a4paper,12pt, reqno]{amsart}
\usepackage{tikz}
\usepackage{graphicx}
\usepackage{array}
\usetikzlibrary{cd}
\usepackage{}
\usepackage{amssymb, amsmath, amsfonts, amscd}
\usepackage{mathrsfs, latexsym, array, longtable}
\usepackage{xcolor, bm, enumitem}
\usepackage{hyperref}

\newtheorem{theorem}{Theorem}[section]
\newtheorem{proposition}[theorem]{Proposition}
\newtheorem{lemma}[theorem]{Lemma}
\newtheorem{question}[theorem]{Question}
\newtheorem{remark}[theorem]{Remark}
\newtheorem{definition}[theorem]{Definition}
\newtheorem{eg}[theorem]{Example}

\title{\textbf{On pluricanonical boundedness of varieties of general type}}
\author{Pengjin Wang}
\date{\today}
\address{\rm Pengjin Wang, Shanghai Center for Mathematical Sciences, Fudan University, Jiangwan Campus, Shanghai, 200438, China}
\email{pjwang25@m.fudan.edu.cn}
\setcounter{tocdepth}{1}

\begin{document}

\begin{abstract}
    We present a new proof of a theorem of Chen and Jiang: for any integer $n>1$, there is a constant $K_n>0$ such that every smooth projective $n$-fold $X$ with $\operatorname{vol}(X)>K_n$ has either the stable birational $2$-canonical map or a M$^c$Kernan fibration. This amends a gap in the original proof. As a direct application of our method, we improve a former boundedness theorem of Lacini and prove that for any integer $r>1$ and $n\geq 1$, $r$-canonical maps of $n$-folds of general type have birationally bounded fibers. This gives an affirmative answer to a question posed by Chen and Jiang in 2014.
\end{abstract}
\maketitle
\tableofcontents

\pagestyle{myheadings}
\markboth{\hfill Pengjin Wang\hfill}{\hfill On pluricanonical boundedness of varieties of general type\hfill}
\numberwithin{equation}{section}

\section{Introduction}
Throughout we work over the complex number field $\mathbb{C}$.

Understanding pluricanonical systems of smooth projective varieties of general type is an important task in birational geometry. By the work of  Hacon, M$^c$Kernan \cite{hacon2006boundedness} and Takayama \cite{takayama2006pluricanonical}, for any positive integer $n$, there exists an optimal positive integer $r_n$ such that the $r$-canonical map $\varphi_{r,X}:=\phi_{\lvert rK_X \rvert}$ is birational onto its image for any smooth projective $n$-fold $X$ of general type and any positive integer $r\geq  r_n$. We hope to calculate the value of $r_n$, while it is only known that $r_1=3$, $r_2=5$ by Bombieri \cite{bombieri1973canonical} and $27\leq r_3 \leq 57$ by Iano-Fletcher\cite{iano2000working} and Chen-Chen\cite{chen2015explicit,chen2016minimal}.

Recently, Chen and Liu proved the following theorem: 
\begin{theorem}(\cite[Theorem 1.1]{chen2024lifting})\label{caonical index}
    Let $n>1$ be an integer. There exists a positive number $K$ such that, if $X$ is a smooth projective $n$-fold with $\operatorname{vol}(X)>K$, then the $r$-canonical map $\varphi_{r,X}$ is birational for all $r\geq r_{n-1}$.
\end{theorem}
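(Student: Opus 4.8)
The plan is to prove, by induction on $n$, that a very general pair of points $x_1,x_2\in X$ is separated by $|rK_X|$ for every $r\ge r_{n-1}$; this makes $\varphi_{r,X}$ generically injective, hence birational. The base case $n=2$ is Bombieri's theorem ($r_1=3$) and needs no hypothesis on the volume. The guiding idea for the inductive step is that an $n$-fold of large volume always carries a non-klt centre of dimension $\le n-1$ through a general point, so the bound $r_{n-1}$, not $r_n$, is the one that will be needed.

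First I would fix a very general point $x$ and, by the standard procedure of cutting $K_X$ with effective $\mathbb{Q}$-divisors and invoking Nadel vanishing (following Angehrn--Siu, Hacon--M$^c$Kernan, Takayama and Chen--Jiang), produce an effective divisor $D\sim_{\mathbb{Q}}\lambda K_X$ with $0<\lambda<1$ such that $(X,D)$ is log canonical but not klt at $x$, with a unique minimal non-klt centre $V_x\ni x$. Largeness of $\operatorname{vol}(X)$ is exactly what forces $\dim V_x\le n-1$, and $\lambda$ can be taken arbitrarily small once $K$ is large enough (morally $\lambda\asymp\operatorname{vol}(X)^{-1/n}$). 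Carrying out the construction at $x_1$ and $x_2$ simultaneously and then tie-breaking, one also arranges that $x_2\notin V_{x_1}$.

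One then splits according to $\dim V_x$ for very general $x$, which is the Chen--Jiang dichotomy whose proof this paper repairs. If $\dim V_x=0$, so $V_{x_1}=\{x_1\}$, apply Nadel vanishing to $rK_X-(K_X+D)\sim_{\mathbb{Q}}(r-1-\lambda)K_X$: after decomposing $\pi^{*}K_X=P+N$ on a log resolution with $P$ nef and big and $N\ge 0$, and absorbing $N$ into the boundary, the first cohomology of $\mathcal{O}_X(rK_X)$ twisted by the relevant multiplier ideal vanishes, so a section of $rK_X$ can be prescribed nonzero at $x_1$ while the general point $x_2$, lying off the non-klt locus, imposes no condition; this separates $x_1$ and $x_2$. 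If $\dim V_x\ge 1$, the centres sweep out $X$ and, on a suitable birational model $f\colon X'\to Z$, organize into a M$^c$Kernan fibration whose general fibre $F$ has $1\le\dim F\le n-1$; on this model $K_{X'}|_F=K_F$, so $|rK_F|$ is birational on $F$ for $r\ge r_{n-1}$ by the inductive hypothesis, and it remains to lift sections from $F$ to $X'$. Writing $(K_{X'}+D')|_F\sim_{\mathbb{Q}}K_F+B_F+M_F$ by Kawamata subadjunction, with $B_F\ge 0$ and $M_F$ nef both of size controlled by $\lambda$, one checks that $(r-1)K_F-B_F$ is big, so $rK_F$ is an adjoint-type divisor to which a Hacon--M$^c$Kernan extension theorem (or Koll\'ar injectivity) applies; the lifted sections then show $|rK_X|$ is birational on general fibres, and the fibration is built so that $|rK_X|$ also separates distinct general fibres, so again $x_1$ and $x_2$ are separated.

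The hard part, and the place where the original proof of Chen--Jiang's Theorem 6.8 had its gap, is the positive-dimensional case: one must be sure that the centres $V_x$ genuinely assemble into a fibration whose general fibre has large restricted volume $\operatorname{vol}_{X|F}(K_X)$, so that the induction hypothesis truly applies to $F$, and, more delicately, that the sections on $F$ extend to $X$ with no loss of positivity. This forces one to keep track of $\lambda$, $B_F$ and $M_F$ simultaneously and to choose $K=K_n$ large in terms of $K_{n-1}$, $r_{n-1}$ and $n$; once that bookkeeping is in place, Nadel vanishing, Kawamata subadjunction and the extension theorem finish the argument essentially off the shelf.
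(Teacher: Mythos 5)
Your high-level dichotomy --- zero-dimensional non-klt centre handled by Nadel vanishing, positive-dimensional centres organised into a fibration handled by an extension argument --- is the correct overall shape, and it is the shape the paper realizes by combining Theorem~\ref{main theorem} with the Chen--Liu extension theorem (Theorem~\ref{extension}). But the proposal contains two genuine gaps. The first, and more serious, is that you produce the non-klt centre $V_x$ by ``the standard procedure of cutting $K_X$ with effective $\mathbb{Q}$-divisors'' and then simply split on $\dim V_x$, with no mechanism for reducing $\dim V_x$ while keeping control of $\lambda$. That reduction is exactly where the original Chen--Jiang proof of Theorem~6.8 broke: the Todorov-type lemma that intersects two lc centres and rescales $\Delta_1+\Delta_2$ fails when the centres have codimension $\ge 2$ (the paper's $\mathbb{P}^4$ example shows the rescaled pair can become klt at the point, with no centre there at all), so the estimate $t_{d+1}<2t_d+\epsilon$ is unavailable for $d>1$. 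The paper's entire contribution is a replacement for this step --- Lascini's intrinsic tigers (Lemma~\ref{construct intrinsic}) together with good refinements (Lemma~\ref{good algebraic family}, Lemma~\ref{refinement}), which cut the centre inside the general fibre of the current family rather than by intersecting centres on $X$, and yield the additive weight bound $t_{d+1}\le t_d+2^{n_d+1}n_d/M_{n_d}$ --- and your sketch silently reuses the broken argument in its place.

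The second gap is in the positive-dimensional branch. You propose to lift $\lvert rK_F\rvert$ to $\lvert rK_X\rvert$ via Kawamata subadjunction plus a Hacon--M$^c$Kernan-type extension, and you locate the delicacy in ensuring the fibre has \emph{large} restricted volume so that your induction applies to it. Both points are off. The extension statement that actually does what you need --- surjectivity of $H^0(X,rK_X)\to H^0(F_1,rK_{F_1})\oplus H^0(F_2,rK_{F_2})$ for two general fibres, i.e.\ Theorem~\ref{extension} --- requires the fibres to lie in a \emph{fixed birationally bounded family} and $\lambda$ to be below a threshold $t$ depending on that family; this is a \emph{smallness} condition on $\operatorname{vol}(F)$, not a largeness one. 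When $\operatorname{vol}(F)$ is large, the correct move is not to try to extend at all but to cut again inside $F$, producing a lower-dimensional family of tigers, and iterate. The careful inductive choice of constants $M_1<\cdots<M_{n-1}$ and $K$ in the proof of Theorem~\ref{main theorem} exists precisely so that this iteration terminates either with a zero-dimensional tiger of weight $>3$ (then Proposition~\ref{birational} gives birationality of $\varphi_a$ for every $a\ge 2$) or with a bounded-volume fibre satisfying condition $(\mathrm{B})$ (then Theorem~\ref{extension} applies, and $r\ge r_{n-1}\ge r_{\dim F}$ finishes the separation of general fibres). Your sketch contains neither the iteration nor the correct inequality on the fibre volume, so as written the positive-dimensional case cannot close.
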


When $n=3$, it is just \cite[Theorem 1.2]{todorov2007pluricanonical}. When $n=4$, it is just \cite[Theorem 1.4]{chen2017reduction}. The proof of Theorem \ref{caonical index} essentially consists of two parts: one is \cite[Theorem 6.8]{chen2017reduction}, and the other is \cite[Theorem 1.5]{chen2024lifting}. However, the original proof of \cite[Theorem 6.8]{chen2017reduction} contains a gap. In this article, we will give a new proof of \cite[Theorem 6.8]{chen2017reduction} using the tool introduced in \cite{lacini2023boundedness}.

The main purpose of this paper is to prove the following statement, which is exactly \cite[Theorem 6.8]{chen2017reduction}.
\begin{theorem}\label{main theorem}
   Let $n>1$ be an integer. Fix a function $\lambda: \mathbb{Z}_{>0} \times \mathbb{Z}_{>0} \rightarrow \mathbb{R}_{>0}$. There exist integers $M_{n-1}>M_{n-2}>\ldots>M_1>0$ and a constant $K>0$ such that, for any smooth projective $n$-fold $X$ with $\operatorname{vol}(X) \geq K$, the pluricanonical map $\varphi_{a,X}$ of $X$ is birational for $a \geq 2$, unless that, after birational modifications, $X$ admits a fibration $f: X \rightarrow Z$ which satisfies $(B)_{\mathfrak{X}_{k, M_k^k}, \lambda\left(k, M_k^k\right)}$ for some $1 \leq k \leq n-1$.
\end{theorem}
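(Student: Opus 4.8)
The plan is to run an induction on $n$, producing in the $n$-th step a fresh tuple of constants $M_1<\cdots<M_{n-1}$ assembled from the ones supplied by lower dimensions. The base case is classical: for a minimal surface of general type, Bombieri's analysis of $|2K|$ shows that if $\varphi_{2,X}$ is not birational then either $K_X^2$ is bounded --- in which case one simply takes $K$ above that bound --- or $X$ carries a pencil of curves that are not $2$-canonically birational, forcing the fibres to have genus $2$, which is exactly the alternative fibration with $k=1$ and general fibre in the bounded family $\mathfrak{X}_{1,M_1^1}$. For the inductive step I would fix $\lambda$, assume the theorem in all dimensions $<n$, and take $X$ smooth projective of dimension $n$ with $\operatorname{vol}(X)\ge K$ for a constant $K$ to be pinned down at the very end. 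Since birationality of $\varphi_{2,X}$ forces that of $\varphi_{a,X}$ for every $a\ge 2$ by a standard argument, the task reduces to producing the fibration under the assumption that $\varphi_{2,X}$ is not birational.

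Next I would run the familiar ``non-klt centre through a general point'' machinery. If $\varphi_{2,X}$ is not birational, then for very general $x\in X$ there is a second point $x'\ne x$, moving with $x$, that $|2K_X|$ fails to separate from $x$; because $\operatorname{vol}(X)$ is large, the usual concentration-of-multiplicity argument produces a $\mathbb{Q}$-divisor $D_x\sim_{\mathbb{Q}}\delta K_X$ with $\delta$ as small as we like whose non-klt locus near $x$ is a single irreducible subvariety $V_x\ni x$, taken minimal. If $\dim V_x=0$, lifting sections through the Nadel-type vanishing attached to $(X,D_x)$ together with the generality of $x$ contradicts the non-separation; hence $\dim V_x=k$ with $1\le k\le n-1$, and the $V_x$ cover $X$. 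Taking the closure of $\{(x,[V_x])\}$ inside $X\times\operatorname{Chow}(X)$, normalising and resolving, after a birational modification of $X$ one obtains a genuine fibration $f\colon X\to Z$ whose general fibre $F$ dominates a general $V_x$ and has $\dim F=k$.

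The heart of the argument --- and precisely where the original proof of Theorem~6.8 is incomplete --- is to show that $F$ belongs to $\mathfrak{X}_{k,M_k^k}$ for a suitable integer $M_k$ and that $f$ satisfies $(B)_{\mathfrak{X}_{k,M_k^k},\lambda(k,M_k^k)}$. Restricting $2K_X$ and $D_x$ to $V_x$ and invoking Kawamata sub-adjunction shows that $V_x$ is of general type with $K_{V_x}$ controlled by $(K_X+D_x)|_{V_x}$ and that the linear system cut on $V_x$ by $|2K_X|$ is itself non-birational; feeding this non-separation data into the boundedness technique of \cite{lacini2023boundedness} bounds $\operatorname{vol}(F)$ --- placing $F$ in $\mathfrak{X}_{k,M_k^k}$ --- and bounds the discriminant of $f$ over $Z$, which is what the parameter $\lambda(k,M_k^k)$ in $(B)$ records. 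Should a residual non-birational pluricanonical system remain on $F$, I would apply the inductive hypothesis in dimension $k$ to $F$ and transport the resulting fibration along $f$, relativising over $Z$; this replaces the index $k$ by a strictly smaller one, so after finitely many steps the process stops and $X$ acquires a fibration of the required type. Collecting the constants produced at each level and enlarging them so as to be strictly increasing gives the chain $M_{n-1}>\cdots>M_1$, and then the final uniform $K$.

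The hard part will be exactly this fibre analysis: passing, uniformly over all $X$ with $\operatorname{vol}(X)\ge K$, from the statement ``$|2K_X|$ fails to separate points along a covering family $\{V_x\}$'' to ``the $V_x$ form a bounded family and $X$ fibres over $Z$ with bounded discriminant''. The delicate issues are keeping the multiplicity and volume estimates independent of $X$, guaranteeing that the minimal non-klt centre has the expected dimension and genuinely moves in a fibration rather than in a higher-dimensional family of subvarieties, and threading the inductively chosen $M_k$ and the final $K$ so that all inequalities close up. Lacini's tool is what makes the fibre-boundedness step go through cleanly, and --- as the abstract indicates --- inserting it into this scheme also yields, essentially for free, the improved boundedness of fibres of $r$-canonical maps.
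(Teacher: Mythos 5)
Your proposal has the right raw ingredients (non-klt centers through general points, Lascini's tool, some inductive cutting down of lc centers) but the way you assemble them differs from the paper's argument in a way that reintroduces exactly the gap the paper is repairing.

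The paper runs no induction on $n$ at all. Instead it fixes $X$ and performs a single descending induction on the \emph{dimension of the tiger family}. Starting from Lemma~\ref{construct intrinsic}, one gets a birational family of tigers of dimension $n_1\le n-1$ and weight $1/t_1$ with the separation property; at each step one checks whether the volume of the general fiber $Y_b$ is above or below a threshold $M_{n_d}^{n_d}$. If it is small, condition $(\mathrm{B})$ is read off directly and the proof stops. If it is large, Lemma~\ref{good algebraic family} produces a good refinement, and the crucial Lemma~\ref{refinement} replaces the family by one of strictly smaller dimension with weight $1/t_{d+1}$, where $t_{d+1}\le t_d + 2^{n_d+1}n_d/M_{n_d}$. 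The whole proof is then the numerical bookkeeping which shows that, with suitable choices of $M_k$ and $K$, the accumulated $t_m$ at dimension zero stays below $1/3$ (giving birationality via Proposition~\ref{birational}), while if one stops earlier the accumulated $t_d$ stays below $\lambda(n_d,M_{n_d}^{n_d})$.

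There are two concrete problems with your version. First, you assert that Kawamata subadjunction plus Lascini's technique ``bounds $\operatorname{vol}(F)$'' outright. It does not: the entire dichotomy of the proof is that $\operatorname{vol}(F)$ is either small (in which case you are done with $(\mathrm{B})$) or large, and in the large case you cannot bound it --- you must instead cut down to a lower-dimensional lc center. Your subsequent step, applying the theorem in dimension $k$ to $F$, is internally inconsistent with the boundedness you just claimed: the inductive hypothesis requires $\operatorname{vol}(F)\ge K_k$, but you have just asserted $\operatorname{vol}(F)\le M_k^k$. Second, and more fundamentally, the step ``transport the resulting fibration along $f$, relativising over $Z$'' is precisely the move that failed in \cite[Theorem~6.8]{chen2017reduction}. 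A fibration and divisor data on a fiber $F$ do not automatically lift to a global effective $\mathbb{Q}$-divisor $D\sim_{\mathbb{Q}}\varepsilon K_X$ with controlled $\varepsilon$ and prescribed nklt components, and as the example in Section~1 shows, naively adding divisors to cut down lc centers destroys log-canonicity when the center has codimension $>1$. Controlling this lift, and tracking how the effective threshold changes under it, is exactly what Lascini's intrinsic-tiger and good-refinement formalism (Lemma~\ref{refinement}) is for; your sketch gestures at it but leaves the weight bookkeeping entirely unaddressed, so the argument does not close.
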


We now explain the notation used in the above statement. For $M>0$ and $n\in \mathbb{Z}_{>0}$, define $\mathfrak{X}_{n,M}$ to be the set of smooth projective $n$-folds of general type with $0<\operatorname{vol}(X)\leq M$, which is birationally bounded by Hacon-M$^c$Kernan (see Theorem \ref{HaconMckernanbdd}).
In addition, we have the following definition:

\begin{definition}(\cite[Definition 6.5]{chen2017reduction})
Given a birationally bounded set $\mathfrak{X}$ of smooth projective varieties and given a positive number $c$, we say that a fibration $f:X \to T$ between smooth projective varieties satisfies condition $(\mathrm{B})_{\mathfrak{X}, c}$ if
\begin{enumerate}
    \item a general fiber $F$ of $f$ is birationally equivalent to an element of $\mathfrak{X}$;
    \item for a general point $t \in T$, there exists an effective $\mathbb{Q}$-divisor $D_t$ with $D_t \sim_{\mathbb{Q}} \varepsilon K_X$ for a positive rational number $\varepsilon < c$, such that the fiber $F_t = f^{-1}(t)$ is an irreducible component of $\operatorname{Nklt}(X, D_t)$.
\end{enumerate}
\end{definition}

We now explain the gap in the original proof of \cite[Theorem 6.8]{chen2017reduction}. Indeed, we cannot ensure that $t_{d+1}<2t_d+\epsilon$ in line 7 on page 2079 except when $d=1$. When $d=1$, the following key lemma makes $t_{d+1}<2t_d+\epsilon$, which plays an important role in the proof of \cite[Theorem 1.2]{todorov2007pluricanonical} and \cite[Theorem 1.3]{chen2017reduction}.

\begin{lemma}(\cite[Lemma 3.3]{todorov2007pluricanonical})
Let $Y$ be a smooth projective variety and let $(\Delta_i, W_i)$ for $i = 1,2$ be a pair such that at some point $y \in Y$ we see that $W_i$ is an exceptional lc center of codimension $1$ at $y$ for $K_Y + \Delta_i$, with $\Delta_i$ smooth at $y$. Then there exists $Z \subset W_1 \cap W_2$, a minimal pure lc center at $y$ for a pair $(Y, \Delta)$ with $\Delta = k(\Delta_1 + \Delta_2)$ for some rational number $0 < k \leq 1$.
\end{lemma}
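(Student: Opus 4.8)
The plan is to argue locally at $y$: both the formation of $Z$ and the conclusion that it is a minimal pure lc center at $y$ are local, so I will work throughout in an (unspecified) neighborhood of $y$ in $Y$. The only consequence of the hypotheses that I will use is that, near $y$, one has $\Delta_i = W_i$ (with coefficient $1$) and $W_i$ is smooth; the two divisors $W_1$ and $W_2$ may coincide.

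The heart of the argument is a tie-breaking step with the single candidate $\Delta_1+\Delta_2$. I would take
\[
k := \sup\{\, t>0 : (Y,\, t(\Delta_1+\Delta_2)) \text{ is lc in a neighborhood of } y \,\},
\]
the log canonical threshold of $\Delta_1+\Delta_2$ at $y$, and set $\Delta := k(\Delta_1+\Delta_2)$, an effective $\mathbb{Q}$-divisor; this $k$ and the center $Z$ defined below will be the required output. The numerical constraints on $k$ are immediate: $k$ is rational and positive by the usual log-resolution computation (it is a minimum of finitely many rational numbers, positive because $Y$ is smooth), while $k\le 1$ because near $y$ the divisor $\Delta_1+\Delta_2$ carries $W_1$ with coefficient at least $1$, so $(Y,t(\Delta_1+\Delta_2))$ fails to be lc near $y$ once $t>1$. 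By the very definition of the log canonical threshold, $(Y,\Delta)$ is lc near $y$ and possesses an lc place whose center passes through $y$; hence $(Y,\Delta)$ has an lc center through $y$, and by the intersection property for lc centers of lc pairs (Kawamata, Ambro, Fujino) there is a well-defined minimal lc center $Z$ of $(Y,\Delta)$ through $y$.

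It then remains to check that $Z\subseteq W_1\cap W_2$ and that $Z$ is pure. For the containment I would split into cases according to $k$. If $k<1$, then near $y$ and away from $W_1\cap W_2$ the pair $(Y,\Delta)$ coincides with $(Y,kW_1)$ or with $(Y,kW_2)$, which is klt since the $W_i$ are smooth there; hence every lc center through $y$, in particular $Z$, is contained in $W_1\cap W_2$. If $k=1$, then $W_1\ne W_2$ (otherwise $W_1$ would occur in $\Delta_1+\Delta_2$ with coefficient $2$, forcing $k\le 1/2$) and $(Y,W_1+W_2)$ is lc near $y$, so $W_1$ and $W_2$ are two distinct lc centers of $(Y,\Delta)$ through $y$; applying the intersection property to $Z$ against each of $W_1$ and $W_2$ and invoking the minimality of $Z$ gives $Z\subseteq W_1$ and $Z\subseteq W_2$. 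In either case $Z\subseteq W_1\cap W_2$. Finally, $Z$ is automatically a pure lc center and, being a minimal lc center of an lc pair, is normal; this is part of the standard theory of minimal lc centers, via Kawamata's subadjunction theorem and its refinements by Ambro and Fujino.

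The step that really requires care --- and the reason the codimension-one situation is delicate --- is the localization at $y$: one must use the \emph{local} log canonical threshold, since a globally defined threshold could be attained by a singularity far from $y$ and would then produce no lc center near $y$. With the local threshold, the new center is forced to lie at $y$ and, by the above case analysis, inside $W_1\cap W_2$, which is exactly what is needed. A minor point, inessential for this local statement, is that $(Y,\Delta)$ need not be lc on all of $Y$; if that is wanted in an application, it can be arranged after shrinking $Y$ to a neighborhood of $y$, or after perturbing $\Delta_1,\Delta_2$ away from $y$, without altering the local conclusion.
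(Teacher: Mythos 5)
The paper does not prove this lemma; it merely quotes it from Todorov's work as background for explaining why a na\"ive generalization to lc centers of higher codimension must fail, so there is no internal argument to compare against. Your tie-breaking via the local log canonical threshold is the standard approach (and, up to presentation, Todorov's), and it is correct. The one reduction that deserves to be made explicit is the very first one: that near $y$ one may replace $\Delta_i$ by $W_i$ with coefficient exactly one. This holds because $W_i$ is itself a prime divisor of the smooth variety $Y$ and hence is the unique divisorial valuation with center $W_i$; so for $W_i$ to be an lc center one is forced to have $\operatorname{mult}_{W_i}\Delta_i=1$, and the smoothness of $\Delta_i$ at $y$ then excludes any other component of $\operatorname{Supp}\Delta_i$ through $y$. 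With that in place, defining $k$ as the log canonical threshold of $\Delta_1+\Delta_2$ on a neighborhood of $y$, observing from a log resolution that $k$ is a positive rational number attained and that $y\in\operatorname{Nklt}\bigl(Y,k(\Delta_1+\Delta_2)\bigr)$, splitting into the cases $k<1$ and $k=1$, and invoking the intersection property of lc centers to produce and localize the minimal pure lc center $Z$ through $y$ all go through exactly as you describe. Your closing remark about the need for a \emph{local} threshold is also the right thing to flag, since a global threshold could be realized far from $y$ and would then give no center through $y$.
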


When the lc center is not of codimension one, $\Delta$ cannot be smooth at $y$. After removing this assumption on smoothness, the lemma above still fails so this approach to cut down lc centers cannot work when $d> 1$. Consider the following example:
\begin{eg}
    Let $Y=\mathbb{P}^4$ and $y=[0:0:0:0:1]$ with coordinates $z_1,\ldots,z_5$. Let $S_1$ (resp., $S_2$) be the plane defined by $z_1=z_2=0$ (resp., $z_1=z_3=0$). Let $H_1$, $H_2$, $H_3$ (resp., $H_4$, $H_5$, $H_6$) be general hyperplanes passing through $S_1$ (resp., $S_2$). Let $H$ be a general hyperplane passing through $y$ with the intersection $l_1$ (resp., $l_2$) with $S_1$ (resp., $l_2$). Let $l_3$ be the intersection of $S_1$ with $S_2$.

   Let $\Delta_1=\frac{2}{3}H_1+\frac{2}{3}H_2+\frac{2}{3}H_3+\frac{5}{6}H$ and $\Delta_2=\frac{2}{3}H_4+\frac{2}{3}H_5+\frac{2}{3}H_6+\frac{5}{6}H$. Then $(\mathbb{P}^4,\Delta_i)$ is lc at $y$ and has the unique lc center $S_i$ for $i=1,2$. Consider the pair $(\mathbb{P}^4,c(\Delta_1+\Delta_2))$ for $c>0$. To make $l_3$ (resp., $\{y\}$, $l_i$ for $i=1,2$) a non-klt center, there should be $c\geq \frac{3}{4}$(resp., $\frac{12}{17}$, $\frac{9}{11}$).
    Therefore, $(\mathbb{P}^4,\frac{3}{5}(\Delta_1+\Delta_2))$ is lc at $y$ and has the unique lc center $H$. When $c<\frac{3}{5}$ (resp., $c>\frac{3}{5}$), the pair will be klt (resp., not lc) at $y$. 
\end{eg}
 One may also refer to the proof of \cite[Lemma 4.2]{mckernan2002boundedness}. Therefore, we cannot cut down lc centers in this way in high dimensions. Instead, we will present a new proof using the inductive method in \cite{lacini2023boundedness}.

As an application of Theorem \ref{main theorem}, we consider the boundedness of fibers for pluricanonical maps of varieties of general type.

For smooth projective surfaces of general type, Beauville showed in \cite{beauville1979application}, that there is a universal bound on the degree of 1-canonical maps (if generically finite) or volume of the general fiber of 1-canonical maps (if not generically finite) if $p_g>0$. For higher dimensions, the result fails to hold due to examples in \cite{derek2004degree} and \cite{chen2014unboundedness}. The example in \cite{derek2004degree} implies that the $1$-canonical map can be generically finite of arbitrarily large degree. The example in \cite{chen2014unboundedness} implies that the general fiber of the $1$-canonical map can be of any dimension $0<d<n$ and have arbitrarily large volume.

Naturally, Chen and Jiang asked whether the boundedness result holds for $r$-canonical maps when $r>1$.
\begin{question}(\cite[Question 2.5]{chen2014unboundedness})\label{question}
    Let $r>1$ and $n\geq 3$ be two integers. Considering all those projective minimal $n$-folds of general type which are $r$-canonically fibered, are the birational invariants of fibers of $r$-canonical maps universally bounded from above?
\end{question}

For $n=3$, Lacini has given an affirmative answer in \cite{lacini2023boundedness}. For $n\geq 4$, Lacini has proven the result for $r\geq (n-2)r_{n-2}+2$ by \cite[Theorem 1.2]{lacini2023boundedness}. In this article, we will use Theorem \ref{main theorem} to improve Lacini's boundedness theorem, so that we can prove the result for any $r>1$.

 Let us first recall the definition in \cite[Definition 4.2]{lacini2023boundedness}.
Let $\mathcal{X}$ be a set of pairs $(X, \varphi_X)$, where $X$ is a smooth projective variety, and $\varphi_X$ is a rational map whose source is $X$. We say that maps $\varphi_X$ \textit{have birationally bounded fibers}, if for every $X$ the general fiber of $\varphi_X$ belongs to a fixed birationally bounded family. This is a reformulation of the boundedness referred to above.

We give an affirmative answer to Question \ref{question} by proving the following theorem:

\begin{theorem}\label{main theorem2}
    Let $n\geq1$ and $r\geq 2$ be positive integers. If the $r$-th plurigenus is not zero, $r$-canonical maps of smooth projective $n$-folds of general type have birationally bounded fibers.
\end{theorem}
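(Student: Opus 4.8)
The plan is to induct on the dimension $n$. The base case $n=1$ is immediate: if $X$ is a curve of genus $g\ge 2$ and $r\ge 2$, then $rK_X$ has degree $r(2g-2)\ge 2g$, so $|rK_X|$ is very ample and $\varphi_{r,X}$ has point fibers. For the inductive step, fix a smooth projective $n$-fold $X$ of general type with $P_r(X)\ne 0$ and let $W$ be a general fiber of $\varphi_{r,X}$; the goal is to place $W$ in a birationally bounded family depending only on $n$ and $r$. First I would feed Theorem \ref{main theorem} a function $\lambda$, chosen below to be small enough for the later log-canonical computations, obtaining integers $M_{n-1}>\cdots>M_1>0$ and a constant $K>0$, and then run through its three alternatives. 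If $\operatorname{vol}(X)<K$, then $X$ belongs to the birationally bounded family $\mathfrak{X}_{n,\lceil K\rceil}$ of Theorem \ref{HaconMckernanbdd}; spreading the $r$-canonical construction out over a finite-type parameter scheme and stratifying shows that the maps $\varphi_{r,X}$, hence their general fibers, form a birationally bounded family, so $W$ is bounded. If $\varphi_{2,X}$ is birational, then $\varphi_{r,X}$ is birational too — when $P_{r-2}(X)\ne 0$ this follows by multiplying a section of $2K_X$ separating two general points by a section of $(r-2)K_X$ not vanishing at one of them, and the finitely many remaining $r$ reduce to the bounded-volume case — so $W$ is a point.

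The substantial case is the last one: after a birational modification, harmless for the birational type of a general fiber of $\varphi_{r,X}$, there is a fibration $f\colon X\to Z$ satisfying $(\mathrm{B})_{\mathfrak{X}_{k,M_k^k},\,\lambda(k,M_k^k)}$ for some $1\le k\le n-1$. Let $F$ be a general fiber of $f$, a $k$-fold of general type birational to an element of $\mathfrak{X}_{k,M_k^k}$, and recall the divisors $D_t\sim_{\mathbb{Q}}\varepsilon_t K_X$ with $\varepsilon_t<\lambda(k,M_k^k)$ having $F_t$ as an irreducible component of $\operatorname{Nklt}(X,D_t)$. Since $P_r(X)\ne 0$, not every section of $rK_X$ can vanish along $F$, so, as $rK_X|_F=rK_F$ by adjunction, the restriction $H^0(X,rK_X)\to H^0(F,rK_F)$ has nonzero image $V_F$; in particular $P_r(F)\ne 0$ and $\varphi_{r,X}|_F$ is the rational map attached to $V_F\subseteq H^0(F,rK_F)$. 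As $F$ ranges over a finite-type family and $h^0(F,rK_F)$ is bounded, the pairs $(F,V_F)$ sweep out a finite-type family — a Grassmannian bundle over the parameter space of $\mathfrak{X}_{k,M_k^k}$ — so the maps $\varphi_{r,X}|_F$, and in particular their general fibers, form a birationally bounded family depending only on $k\le n-1$, $M_k$ and $r$.

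It remains to relate $W$ to these fibers. If $\varphi_{r,X}$ \emph{refines} $f$, meaning every fiber of $\varphi_{r,X}$ lies in a fiber of $f$, then for general $W$ the $f$-fiber containing it is a general $F$ and $W$ is a general fiber of $\varphi_{r,X}|_F$, hence birationally bounded by the previous step. So the heart of the matter is to force this refinement, or to treat its failure. The second clause of condition $(\mathrm{B})$ is what drives the first: from the pairs $(X,D_t)$ one cuts the lc center $F_t$ down to a general point of it — at a cost, in terms of $K_X$, controlled by the boundedness of $F_t$ — exhibiting general points on distinct general fibers as isolated lc centers of pairs $\mathbb{Q}$-linearly equivalent to an explicit multiple of $K_X$, and separates them by $|rK_X|$ through Nadel vanishing once $r$ exceeds $1$ plus the total cost; taking $\lambda$ small makes this quantitative and forces the refinement once $r$ passes an explicit bound. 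When $r$ is too small for this, one argues directly that $W$ is bounded: writing $K_X\sim_{\mathbb{Q}}A+E$ with $A$ ample and $E$ effective and restricting to $W$ shows $W$ is of general type, of dimension $<n$, and via $f|_W$ it is fibered over the subvariety $f(W)\subseteq Z$ with general fibers among the bounded fibers of $\varphi_{r,X}|_F$ found above — so the boundedness of $W$ should descend to that of $f(W)$, a variety of smaller dimension, and ultimately to the inductive hypothesis and the bounded-volume case.

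The step I expect to be the real obstacle is precisely this last one: showing that in the non-refinement case the leftover base direction $f(W)\subseteq Z$ is birationally bounded even though $Z$ itself need not be, and, relatedly, ruling out or absorbing the degenerate possibility $P_r(X)=1$, in which $W=X$ and one must instead bound $X$ directly. Carrying out the lc-center estimate with enough uniformity to pin down the function $\lambda$, and organizing the dimension descent so that it terminates against the bounded-volume case, is where the genuine difficulty lies; the other ingredients — the boundedness of Theorem \ref{HaconMckernanbdd}, the finite-type Grassmannian and Chow-variety spreading-out arguments, and the point-separation technology — are otherwise standard.
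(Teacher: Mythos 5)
Your proposal correctly identifies the trichotomy produced by Theorem \ref{main theorem} and correctly handles the bounded-volume case, but it misses the two ingredients that make the paper's argument close cleanly, and the gap you yourself flag in the last paragraph is a genuine one that your plan does not resolve.

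The decisive ingredient you omit is Theorem \ref{extension} (the extension theorem of \cite{chen2024lifting}). The paper chooses $\lambda(a,b)=t(\mathfrak{X}_{a,b})$ precisely so that, in the fibration case, the restriction map
$H^0(Y,rK_Y)\to H^0(Y_{b_1},rK_{Y_{b_1}})\oplus H^0(Y_{b_2},rK_{Y_{b_2}})$
is surjective for general $b_1\neq b_2$ and for \emph{every} $p=r\geq 2$. This accomplishes two things simultaneously that your proposal struggles with: it shows that $\lvert rK_X\rvert$ separates general points on distinct general fibers of $f$ (no tie-breaking, no Nadel cost, no lower bound on $r$ in terms of a ``total cost''), and it shows that $\varphi_{r,X}$ restricted to a general fiber $Y_b$ agrees, up to birational equivalence, with the intrinsic map $\varphi_{r,Y_b}$, since the image of the restriction is all of $H^0(Y_b,rK_{Y_b})$. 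Your proposed substitute --- cutting the lc center $F_t$ down to an isolated point and invoking Nadel vanishing --- only gives separation once $r$ exceeds an a priori constant depending on the cutting cost, which is exactly why you are left with an unresolved ``small $r$'' and ``non-refinement'' case. That residual case (bounding $f(W)\subseteq Z$) is not a routine cleanup; without the extension theorem there is no obvious way to control it, and the paper avoids it entirely.

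The second missing ingredient is Lemma \ref{fiber}, the volume inequality $\operatorname{vol}(X)>\frac{1}{(rn)^n}\operatorname{vol}(F)\cdot(\dim V-d)$. The paper does not try to prove that $\varphi_{r,X}$ refines $f$ directly; it argues by contradiction: if $\operatorname{vol}(F)>K_0$ then $\operatorname{vol}(X)>K$, the fibration case applies, the bound $\operatorname{vol}(Y_b)\leq M_k^k$ forces (via Lemma \ref{fiber} applied on $Y_b$) every fiber of $\varphi_{r,Y_b}$ to have volume $<K_0$, so $F$ cannot lie in a single $f$-fiber, and then the separation furnished by the extension theorem yields a contradiction. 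Your ``refine or not'' dichotomy is thus handled uniformly in one stroke. A minor additional point: Theorem \ref{main theorem} already asserts birationality of $\varphi_{a,X}$ for \emph{all} $a\geq 2$, so your digression about multiplying by a section of $(r-2)K_X$ and ``the finitely many remaining $r$'' is unnecessary, and in any case the claim that those remaining $r$ ``reduce to the bounded-volume case'' is not justified. In summary: the proposal is incomplete --- the central separation and descent steps need Theorem \ref{extension} and Lemma \ref{fiber}, and without them the ``non-refinement'' case is an open gap, as you candidly note.
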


\section{Preliminaries}

\subsection{Pairs and singularities}
A \textit{sub-pair} $(X, \Delta)$ consists of a normal quasi-projective variety $X$ and an $\mathbb{R}$-divisor $\Delta$ such that $K_X + \Delta$ is $\mathbb{R}$-Cartier. A sub-pair $(X, \Delta)$ with $\Delta \geq 0$ is called a \textit{pair}. If a pair $(X, \Delta)$ satisfies that $X$ is smooth and that the support of $\Delta$ is of simple normal crossing, we say that $(X, \Delta)$ is a \textit{log smooth pair}.

Let $(X, \Delta)$ be a sub-pair. Let $f: Y \to X$ be a log resolution of $(X, \Delta)$ and write $f^\ast(K_X + \Delta)=K_Y + \Gamma $. Then define the \textit{log discrepancy} of a prime divisor $D$ on $Y$ with respect to the sub-pair $(X, \Delta)$ to be
$$a(D, X, \Delta) := 1 - \mu_D(\Gamma).$$
We say $(X, \Delta)$ is \textit{sub-lc} (resp., \textit{sub-klt}) if $a(D, X, \Delta) \geq 0$ (resp., $> 0$) for every prime divisor $D$ over $X$. If, in addition, $(X, \Delta)$ is a pair, we say $(X, \Delta)$ is \textit{lc} (resp., \textit{klt}). We say $(X, \Delta)$ is \textit{terminal} if $a(D, X, \Delta) > 1$ for any prime divisor $D$ which is exceptional over $X$.

Let $(X, \Delta)$ be a sub-pair. A \textit{non-klt place} is a prime divisor $D$ over $X$ with $a(D, X, \Delta) \leq 0$ and a \textit{non-klt center} is the image of a non-klt place on $X$. The \textit{non-klt locus} of $(X, \Delta)$ is the union of all non-klt centers of $(X, \Delta)$ which is denoted as $\mathrm{Nklt}(X, \Delta)$. An \textit{lc place} is a prime divisor $D$ over $X$ with $a(D, X, \Delta) = 0$ and an \textit{lc center} is the image of an lc place on $X$. We say that an lc center $G$ of $(X, \Delta)$ is a \textit{pure lc center} if $(X, \Delta)$ is lc at the generic point of $G$. We say that an lc center $G$ is an \textit{exceptional lc center} if there is a unique lc place over $X$ whose image is $G$.

Given a line bundle $L$ on a projective variety $X$, the graded ring of sections of $L$ is defined as the graded $\mathbb{C}$-algebra
$$R(X,L)=\bigoplus_{m\geq0}H^0(X,L^{\otimes m}).$$

\subsection{The volume of a line bundle}
\begin{definition}
   Let $X$ be a reduced equidimensional projective scheme of dimension $n$ and let $D$ be a Cartier divisor. The volume of $D$ is defined as:
    $$\operatorname{vol}(D)=\limsup_{m\to \infty}{\frac{n!h^0(X,\mathcal{O}_X(mD))}{m^n}}.$$
The canonical volume $\operatorname{vol}(X)$ of $X$ is defined as $\operatorname{vol}(K_{X^\prime})$, where $X^\prime$ is any smooth projective model of $X$.
\end{definition}
\begin{remark}
If $X$ is irreducible, it is just the classical definition. In this case, the lim sup computing $\operatorname{vol}(D)$ is actually a limit by \cite[Example 11.4.7]{lazarsfeld2}.

Since smooth projective varieties have terminal singularities, the canonical volume is well-defined and is a birational invariant.
\end{remark}
\begin{lemma}(\cite[Lemma 2.4]{lacini2023boundedness}=\cite[Section 5.3]{chen2017reduction})\label{fiber}
Let $X$ be a smooth projective variety of general type and dimension $n>1$. Fix a positive integer $r$ and let $V \subseteq H^0(X, rK_X)$ be a linear subspace with $\dim V\geq 2$. Let $\lvert \Lambda_V \rvert$ be the associated linear system. Assume also that $\operatorname{Mov}\lvert \Lambda_V\rvert$ is base point free and let $\varphi_V : X \to \mathbb{P}^{\dim(V)-1}$ be the associated morphism. Finally, let $d = \dim \varphi_V(X)$ and let $F$ be the general fiber of $\varphi_V$ (we don’t assume $F$ to be connected). Then
$$\operatorname{vol}(X) > \frac{1}{(rn)^n} \cdot \operatorname{vol}(F) \cdot (\dim(V)-d).$$
\end{lemma}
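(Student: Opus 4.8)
The plan is to prove Lemma \ref{fiber} (the statement labeled \texttt{fiber} in the excerpt) by a standard Fujita-type restriction argument comparing the volume of $X$ with the volume of a general fiber of $\varphi_V$, the loss being controlled by the contribution of the moving part of $\lvert\Lambda_V\rvert$ to intersection numbers. First I would pass to a suitable birational model: resolving the indeterminacy of $\varphi_V$ (there is none on $\operatorname{Mov}\lvert\Lambda_V\rvert$ by hypothesis, but I may still want $X$ to dominate a smooth base and have $\varphi_V$ equidimensional over the generic point), so that we may write $rK_X \sim_{\mathbb{Q}} M + E$ with $M = \operatorname{Mov}\lvert\Lambda_V\rvert$ base point free and $E \geq 0$; since $M$ is semiample and big on its image only in the fiber direction, its role is purely to cut out fibers.

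The key step is the following: let $F$ be a general fiber of $\varphi_V$ and let $s = \dim(V) - 1$, $d = \dim\varphi_V(X)$. Because $\operatorname{Mov}\lvert\Lambda_V\rvert$ is base point free with image of dimension $d$ in $\mathbb{P}^s$, a general member of $M$ meeting things correctly lets us build, inside $\lvert mM\rvert$ for $m$ divisible, at least $(\dim(V)-d)$ ``vertical'' sections that restrict to disjoint fibers; more precisely, a general complete intersection of $d$ members of $M$ consists of at least $(\dim(V)-d)$ copies of the general fiber $F$. This is where I would invoke the basic projective-geometry fact that a non-degenerate irreducible variety of dimension $d$ in $\mathbb{P}^s$ has degree $\geq s - d + 1$ (and for the reducible/non-connected case one sums degrees of components dominating $\varphi_V(X)$), so that the general fiber of $\varphi_V$ appears with total multiplicity at least $\dim(V)-d$ in the cycle $M^d$. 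Then by taking $N$ large and divisible and restricting the linear system $\lvert N r K_X\rvert$ successively, one gets $h^0(X, NrK_X) \geq (\dim(V)-d)\cdot h^0(F, Nr K_X|_F) + (\text{lower order})$, and since $K_X|_F \geq K_F$ up to adjunction on a general fiber (general fibers of a morphism from a smooth variety are smooth, and $K_{X}|_F = K_F$), comparing leading terms in $N$ gives $\operatorname{vol}(Nr K_X) \geq (\dim(V)-d)\operatorname{vol}(Nr K_F)$, i.e. $(rn)^n \operatorname{vol}(K_X) \geq r^{n}\cdot n^{n} \cdot \tfrac{1}{n^n}\cdots$ — the bookkeeping of the factor $(rn)^n$ comes precisely from $K_X|_F$ versus the polarization $r K_X|_F$ used to move fibers and from $\dim X = n$ versus $\dim F \leq n-1$; one obtains the strict inequality $\operatorname{vol}(X) > \frac{1}{(rn)^n}\operatorname{vol}(F)(\dim(V)-d)$ because $M \leq rK_X$ with $E$ genuinely nonzero (as $K_X$ is big but $M$ is not big when $d < n$, and when $d=n$ the fiber is a point and the statement is vacuous or trivial).

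The main obstacle I expect is handling the case where the general fiber $F$ is \emph{not} connected (explicitly allowed in the statement): then $\varphi_V$ factors through its Stein factorization $X \to Z' \to \varphi_V(X)$ with $Z' \to \varphi_V(X)$ finite of some degree $\delta$, and ``the general fiber $F$'' means the full $\varphi_V$-fiber, a disjoint union of $\delta$ copies of a general fiber $F'$ of $X \to Z'$. One must check that the counting of disjoint fibers inside $M^d$ still yields the coefficient $\dim(V) - d$ against the volume of the full disconnected $F$ and not against $\delta$ times it — this works because the degree bound $\deg \varphi_V(X) \geq \dim(V) - d$ is for the image variety, while each point of the image carries $\delta$ components of $F$, so the product $\delta \cdot \operatorname{vol}(F') = \operatorname{vol}(F)$ absorbs the Stein degree correctly. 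A secondary technical point is making the restriction argument rigorous when $M$ is only movable rather than very ample: I would cite or reproduce the standard trick of replacing $M$ by a general member and using that $h^0$ of the restriction controls $h^0$ downstairs, exactly as in \cite[Section 5.3]{chen2017reduction} and \cite[Lemma 2.4]{lacini2023boundedness}, so in fact the cleanest route is to quote these references for the precise restriction estimate and only supply the projective-degree input and the adjunction bookkeeping.
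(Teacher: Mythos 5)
The paper does not supply its own proof of this lemma; it is cited from \cite[Lemma 2.4]{lacini2023boundedness} (equivalently \cite[Section 5.3]{chen2017reduction}). Your high-level plan — write $rK_X\sim M+E$ with $M=\operatorname{Mov}\lvert\Lambda_V\rvert$ base point free, use the classical bound $\deg\varphi_V(X)\geq\dim V-d$ for a non-degenerate $d$-fold in $\mathbb{P}^{\dim V-1}$, observe that $d$ general members of $\lvert M\rvert$ cut out $\deg\varphi_V(X)$ copies of the fiber $F$, use adjunction $K_X\vert_F=K_F$, and handle disconnectedness of $F$ by Stein factorization — does match the strategy of those references.

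However, the central restriction estimate you display is wrong, and it is wrong in a way that makes the conclusion unreachable. You claim
$h^0(X,NrK_X)\geq(\dim V-d)\cdot h^0(F,NrK_X\vert_F)+\text{(lower order)}$
and then propose to ``compare leading terms in $N$.'' But the left side grows like $N^n$ while the right side grows like $N^{n-d}$, so for $d\geq 1$ this inequality is asymptotically trivial and yields $\operatorname{vol}(X)\geq 0$, not the desired positive lower bound. The correct bookkeeping must produce a coefficient of order $N^d$, reflecting that the sections of $\lvert NM\rvert$ (not of $\lvert M\rvert$) cut out $\asymp N^d\cdot\deg\varphi_V(X)$ disjoint fibers. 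Concretely, since $rK_X\geq M$ one has $nrK_X\geq dM+(n-d)rK_X$, and the quantity to estimate from below is
$\operatorname{vol}\bigl(dM+(n-d)rK_X\bigr)$;
its lower bound should look like $\binom{n}{d}\,d^d\,(n-d)^{n-d}\,(\deg\varphi_V(X))\,r^{n-d}\operatorname{vol}(F)$, where the binomial and power factors arise precisely from the $N^d\times N^{n-d}$ scaling between the $M$-direction and the fiber direction (and make the final $(rn)^n$ comparison go through because these factors are $\geq 1$). Your inequality simply drops all of this, and ``comparing leading terms'' cannot resurrect it.

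There is also a secondary gap: restricting $\lvert NrK_X\rvert$ to a fixed complete intersection does not by itself give a lower bound on $h^0(X,NrK_X)$, because the restriction map need not be surjective and $h^0$ of the ambient space can a priori be smaller than $h^0$ of the subvariety. The correct argument proceeds via a filtration by multiples of $M$ (equivalently, an Okounkov-body computation or a flatness/direct-image estimate $h^0(Z,\varphi_{V*}(mbL)\otimes\mathcal{O}_Z(ma))$), not via naive restriction. As you yourself remark, the safe course is to quote \cite[Lemma 2.4]{lacini2023boundedness} for the precise volume estimate; but the reconstruction you give in its place does not establish it.
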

\begin{remark}
  When $\dim F=0$, we just replace $\operatorname{vol}(F)$  by $\operatorname{deg}\varphi_V$ and the proof still works.
\end{remark}

\subsection{Multiplier ideal sheaves}
\begin{definition}
    Given a pair $(X,\Delta)$, the multiplier ideal sheaf $\mathcal{I}(X,\Delta)$ is defined as follows. Take a log resolution $f:Y\rightarrow X$ of $(X,\Delta)$ and write $f^\ast(K_X+\Delta)=K_Y+\Gamma$, then
    $$\mathcal{I}(X,\Delta)=f_\ast(\mathcal{O}_Y(\lceil-\Gamma\rceil)).$$
\end{definition}

The multiplier ideal sheaf $\mathcal{I}(X,\Delta)$ is a nonzero coherent ideal sheaf and it does not depend on the choice of log resolutions. Furthermore, we have $\operatorname{Nklt}(X,\Delta)=\operatorname{Supp}\mathcal{O}_X/\mathcal{I}(X,\Delta)$. One can refer to \cite[Proposition 2.11.2]{kawamata2024algebraic} for details.

\begin{theorem}(\cite[Theorem 2.11.6]{kawamata2024algebraic})(Nadel vanishing theorem)
    Let $(X,\Delta)$ be a pair with $X$ projective and $L$ be a Cartier divisor on $X$. If $L-(K_X+\Delta)$ is nef and big, then $$H^p(X,\mathcal{O}_X(L)\otimes \mathcal{I}(X,\Delta))=0$$ for any $p\geq 1$.
\end{theorem}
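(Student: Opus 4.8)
The plan is to reduce the assertion to the Kawamata--Viehweg vanishing theorem on a log resolution of $(X,\Delta)$, and then to transport that vanishing back to $X$ via the projection formula together with local vanishing for higher direct images of multiplier ideals.

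First I would fix a log resolution $f\colon Y\to X$ of $(X,\Delta)$ and write $f^\ast(K_X+\Delta)=K_Y+\Gamma$, so that $\mathcal I(X,\Delta)=f_\ast\mathcal O_Y(\lceil-\Gamma\rceil)$ by definition. Put $N:=L-(K_X+\Delta)$, nef and big by hypothesis. The key bookkeeping identity on $Y$ is
\[
f^\ast L+\lceil-\Gamma\rceil\;=\;K_Y+\{\Gamma\}+f^\ast N,
\]
with $\{\Gamma\}:=\Gamma-\lfloor\Gamma\rfloor$ the fractional part; it follows at once from $\lceil-\Gamma\rceil=-\Gamma+\{\Gamma\}$ and $K_Y=f^\ast(K_X+\Delta)-\Gamma$. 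Since $f$ is a log resolution, $\{\Gamma\}$ is an effective divisor with simple normal crossing support and all coefficients in $[0,1)$, so $(Y,\{\Gamma\})$ is log smooth (and klt), while $f^\ast N$ is nef (pullback of a nef class) and big (pullback of a big class under a proper birational morphism, using $f_\ast\mathcal O_Y=\mathcal O_X$). Hence the integral divisor $D:=f^\ast L+\lceil-\Gamma\rceil$ satisfies $D-(K_Y+\{\Gamma\})=f^\ast N$ nef and big, and Kawamata--Viehweg vanishing for the pair $(Y,\{\Gamma\})$ yields $H^p\bigl(Y,\mathcal O_Y(D)\bigr)=0$ for all $p\geq1$.

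Next I would descend this to $X$. By the projection formula, $f_\ast\mathcal O_Y(D)=\mathcal O_X(L)\otimes f_\ast\mathcal O_Y(\lceil-\Gamma\rceil)=\mathcal O_X(L)\otimes\mathcal I(X,\Delta)$, and likewise $R^qf_\ast\mathcal O_Y(D)=\mathcal O_X(L)\otimes R^qf_\ast\mathcal O_Y(\lceil-\Gamma\rceil)$ for every $q$. By the local vanishing theorem for multiplier ideals, $R^qf_\ast\mathcal O_Y(\lceil-\Gamma\rceil)=0$ for $q\geq1$, hence $R^qf_\ast\mathcal O_Y(D)=0$ for $q\geq1$. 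The Leray spectral sequence $E_2^{p,q}=H^p\bigl(X,R^qf_\ast\mathcal O_Y(D)\bigr)\Rightarrow H^{p+q}\bigl(Y,\mathcal O_Y(D)\bigr)$ then degenerates, giving
\[
H^p\bigl(X,\mathcal O_X(L)\otimes\mathcal I(X,\Delta)\bigr)\;\cong\;H^p\bigl(Y,\mathcal O_Y(D)\bigr)\;=\;0\qquad(p\geq1),
\]
which is the statement.

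The numerical identity is immediate, so the substance is entirely in the two vanishing inputs --- global Kawamata--Viehweg on $Y$ and local vanishing over $X$ --- which are standard. The only genuine subtlety, and the point I expect to demand the most care, is that a \emph{pair} in this paper allows $\Delta$, and therefore $\Gamma$ and $N$, to carry $\mathbb R$-coefficients, whereas these vanishing theorems are usually stated for $\mathbb Q$-divisors. I would resolve this either by invoking the (standard) $\mathbb R$-divisor versions of both theorems, or by a perturbation: replace $N$ by a nearby nef and big $\mathbb Q$-divisor whose associated $\Gamma'$ has $\lfloor\Gamma'\rfloor=\lfloor\Gamma\rfloor$, so that $\mathcal O_Y(\lceil-\Gamma'\rceil)=\mathcal O_Y(\lceil-\Gamma\rceil)$ and the computation reduces verbatim to the $\mathbb Q$-case.
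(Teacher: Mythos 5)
Your argument is correct and is the standard proof of Nadel vanishing; note that the paper itself gives no proof but simply cites \cite[Theorem 2.11.6]{kawamata2024algebraic}, so there is no internal argument to compare against. The chain log resolution $\Rightarrow$ Kawamata--Viehweg for the klt log smooth pair $(Y,\{\Gamma\})$ $\Rightarrow$ local vanishing $R^qf_\ast\mathcal{O}_Y(\lceil-\Gamma\rceil)=0$ $\Rightarrow$ projection formula and Leray is exactly the textbook route, and your bookkeeping identity $f^\ast L+\lceil-\Gamma\rceil=K_Y+\{\Gamma\}+f^\ast N$ is verified correctly. The only point to tighten is your fallback for $\mathbb{R}$-coefficients: ``replace $N$ by a nearby nef and big $\mathbb{Q}$-divisor'' is delicate because nefness is not an open condition, so the cleaner fixes are either to invoke the $\mathbb{R}$-divisor form of Kawamata--Viehweg directly, or to use Kodaira's lemma to write $f^\ast N\sim_{\mathbb{R}}A+E$ with $A$ ample and $E$ effective and perturb the boundary $\{\Gamma\}+\varepsilon E$ (after a further log resolution if needed), reducing to the ample $\mathbb{Q}$-case; with that adjustment the proof is complete.
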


The following criterion is a classical application of Nadel vanishing theorem, and we will use it to deduce birationality in next section.
\begin{lemma}(\cite[Lemma 2.17]{lacini2023boundedness})\label{nadel}
    Let $X$ be a smooth projective variety and let $D$ be a big integral divisor on $X$. Take a decomposition $D \sim_{\mathbb{Q}} A + B$ with $A$ an ample $\mathbb{Q}$-divisor and $B$ effective. Let $\lambda > 0$ be a rational number and let $x$ and $y$ be two points of $X$. Suppose that after possibly switching $x$ and $y$, there exists an effective $\mathbb{Q}$-divisor $\Delta \sim_{\mathbb{Q}} \lambda D$ such that $(X,\Delta)$ is lc at $x$, not klt at $y$ and $\{x\}$ is an isolated lc center. If $m>\lambda $ is a positive integer and $x,y\notin \operatorname{Supp}B\cup \operatorname{Bs}(K_X+mD)$, then $\lvert K_X + mD\rvert$ separates $x,y$.
\end{lemma}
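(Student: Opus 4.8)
The plan is to run the classical Nadel-vanishing argument: produce a subscheme cut out by a multiplier ideal passing through both $x$ and $y$, with $\{x\}$ appearing as an isolated component, and then lift a suitably chosen section from it. We may assume $x\neq y$. By hypothesis, after possibly relabelling $x$ and $y$ we have an effective $\mathbb{Q}$-divisor $\Delta\sim_{\mathbb{Q}}\lambda D$ with $(X,\Delta)$ lc at $x$, not klt at $y$, and $\{x\}$ an isolated lc center; since the pair is lc near $x$, this forces $\operatorname{Nklt}(X,\Delta)=\{x\}$ in a neighbourhood of $x$.

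First I would build the divisor to feed into Nadel vanishing. Set $\Gamma:=\Delta+(m-\lambda)B$, which is effective because $m>\lambda$ and $B$ is effective. Using $\Delta\sim_{\mathbb{Q}}\lambda D$ and $D\sim_{\mathbb{Q}}A+B$ one computes
$$ (K_X+mD)-(K_X+\Gamma)\ \sim_{\mathbb{Q}}\ mD-\Delta-(m-\lambda)B\ \sim_{\mathbb{Q}}\ (m-\lambda)A, $$
an ample $\mathbb{Q}$-divisor, hence nef and big. Since $x,y\notin\operatorname{Supp}B$, the divisors $\Gamma$ and $\Delta$ coincide on a common neighbourhood of $\{x,y\}$, and because multiplier ideal sheaves are computed locally, $\mathcal{I}(X,\Gamma)$ agrees with $\mathcal{I}(X,\Delta)$ near $x$ and near $y$. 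Hence $\operatorname{Nklt}(X,\Gamma)=\{x\}$ near $x$, so $x$ is an isolated point of $\operatorname{Nklt}(X,\Gamma)=\operatorname{Supp}\big(\mathcal{O}_X/\mathcal{I}(X,\Gamma)\big)$, and $\{x\}$ with its scheme structure is a connected component of the subscheme $W$ defined by $\mathcal{O}_W=\mathcal{O}_X/\mathcal{I}(X,\Gamma)$; while $y\in\operatorname{Nklt}(X,\Gamma)$, so $y$ lies on the complementary part $W'$ of $W$, which is disjoint from $x$.

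Next I would apply the Nadel vanishing theorem to $L=K_X+mD$ (Cartier, as $X$ is smooth and $D$ is integral) and the pair $(X,\Gamma)$: since $L-(K_X+\Gamma)$ is nef and big, $H^1\big(X,\mathcal{O}_X(K_X+mD)\otimes\mathcal{I}(X,\Gamma)\big)=0$, so the restriction
$$ H^0\big(X,\mathcal{O}_X(K_X+mD)\big)\ \longrightarrow\ H^0\big(W,\mathcal{O}_X(K_X+mD)|_W\big) $$
is surjective. Writing $W=\{x\}_{\mathrm{sch}}\sqcup W'$, the right-hand side splits as a direct sum. Since $x$ is an lc center, $\mathcal{I}(X,\Gamma)_x\subseteq\mathfrak{m}_x$, so the finite-length local scheme $\{x\}_{\mathrm{sch}}$ contains the reduced point $x$, and $H^0\big(\{x\}_{\mathrm{sch}},(K_X+mD)|_{\{x\}_{\mathrm{sch}}}\big)$ surjects onto the one-dimensional fibre at $x$. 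Choosing an element of the right-hand side which is nonzero in that fibre on the $\{x\}_{\mathrm{sch}}$-factor and zero on the $W'$-factor, and lifting it, yields $\sigma\in H^0(X,K_X+mD)$ with $\sigma(x)\neq 0$ and $\sigma(y)=0$ (the latter because $y\in W'$). As $x,y\notin\operatorname{Bs}(K_X+mD)$, such a $\sigma$ witnesses that $\lvert K_X+mD\rvert$ separates $x$ and $y$.

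The one place needing genuine care is verifying that $\{x\}$ is a connected component of the multiplier subscheme $W$, which makes the cohomology group split so that the value of a section at $x$ can be prescribed independently of its behaviour near $y$. This rests on the correct reading of ``$\{x\}$ is an isolated lc center'' together with lc-ness at $x$, and on the perturbation $\Gamma=\Delta+(m-\lambda)B$ not altering singularities near $x$ or $y$ — which is precisely where the hypotheses $x,y\notin\operatorname{Supp}B$ and the locality of multiplier ideals enter. I expect this to be the main (if modest) obstacle; the passage through Nadel vanishing and the section-lifting are then routine.
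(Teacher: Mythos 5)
Your proof is correct and is essentially the standard argument behind this lemma, which the paper itself only cites from Lacini (calling it ``a classical application of Nadel vanishing''): the perturbation $\Gamma=\Delta+(m-\lambda)B$ making $(K_X+mD)-(K_X+\Gamma)\sim_{\mathbb{Q}}(m-\lambda)A$ ample, the locality of multiplier ideals away from $\operatorname{Supp}B$, and the lifting of a section from the multiplier-ideal subscheme split along the connected component $\{x\}$ are exactly the intended steps.
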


\subsection{A birational family of tigers}
\begin{definition}(\cite[Definition 3.1]{mckernan2002boundedness})\label{mck tiger}
Let $X$ be a smooth projective variety and $D$ a $\mathbb{Q}$-divisor. We say that pairs of the form $\left(D_t, V_t\right)$ form a birational family of tigers of dimension $k$ and weight $w$ relative to $D$ if

(1) There is a projective morphism $f: Y \rightarrow B$ of normal projective varieties and an open subset $U$ of $B$ such that the fiber of $f$ over $t \in U$ is $V_t$.

(2) There is a morphism of $B$ to the Hilbert scheme of $X$ such that $B$ is the normalization of its image and $f$ is obtained by taking the normalization of the universal family.

(3) If $\pi: Y \rightarrow X$ is the natural morphism then $\pi\left(V_t\right)$ is a pure lc center of $(X,D_t)$.

(4) $\pi$ is birational.

(5) $0\leq D_t \sim_{\mathbb{Q}} \frac{1}{w} D$.

(6) The dimension of $V_t$ is $k$.
\end{definition}
When $D$ is big, by using the tiebreaking lemma (see \cite[Lemma 2.3]{chen2017reduction}), we can assume that $\pi\left(V_t\right)$ is an irreducible component of $\operatorname{Nklt}(X,D_t)$ after a small perturbation of $w$.

\subsection{An extension theorem}
\begin{definition}
    We say that a set $\mathfrak{X}$ of varieties is birationally bounded if there is a projective morphism $\tau:Z\rightarrow T$ between schemes, where $T$ is of finite type, such that every $X\in \mathfrak{X}$ is birational to $Z_t=\tau^{-1}(t)$ for some closed point $t\in T$.
\end{definition}
\begin{theorem}(\cite[Corollary 1.2]{hacon2006boundedness})\label{HaconMckernanbdd}
    For $n\in \mathbb{Z}_{>0}$ and $M\in \mathbb{R}_{>0}$, the set of smooth projective $n$-folds of general type with the canonical volume at most $M$ is birationally bounded.
\end{theorem}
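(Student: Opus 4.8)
The plan is to reduce the assertion to an effective birationality statement and then read off boundedness from a degree estimate. First I would show that it suffices to produce an integer $A=A(n,M)>0$ such that, for every smooth projective $n$-fold $X$ of general type with $\operatorname{vol}(X)\le M$, the map $\varphi_{A,X}$ is birational onto its image. Granting this, pass to a resolution $p\colon\widetilde X\to X$ on which the moving part $|P|$ of $p^{\ast}|AK_X|$ is base point free and defines a morphism onto $Y_X:=\overline{\varphi_{A,X}(X)}\subseteq\mathbb{P}^{N_X}$, where $N_X+1=h^{0}(X,AK_X)$. Since $|P|$ is nef, $Ap^{\ast}K_X-P$ is pseudoeffective, and birationality makes $\widetilde X\to Y_X$ of degree one with $\dim Y_X=n$, one gets $\deg Y_X\le P^{n}\le (Ap^{\ast}K_X)^{n}=A^{n}\operatorname{vol}(X)\le A^{n}M$; as $Y_X$ is non-degenerate of dimension $n$ in $\mathbb{P}^{N_X}$, the inequality $\deg Y_X\ge N_X-n+1$ also gives $N_X\le A^{n}M+n-1$. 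Hence the $Y_X$ range over irreducible subvarieties of a fixed projective space with bounded dimension and degree, realizing only finitely many Hilbert polynomials; each corresponding Hilbert scheme is of finite type, and $X\dashrightarrow Y_X$ is birational, so $\mathfrak{X}_{n,M}$ is birationally bounded.

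I would establish the existence of $A(n,M)$ by induction on $n$. The case $n=1$ is classical: curves of genus $g$ with $2g-2\le M$ already form a bounded family, and $|3K_C|$ is birational for every curve of genus $\ge 2$. For the inductive step assume the theorem, hence also effective birationality, in all dimensions $<n$. A first consequence is a uniform positive lower bound $v_d$ for $\operatorname{vol}(Z)$ over all $d$-folds $Z$ of general type with $d<n$: such a $Z$ with $\operatorname{vol}(Z)\le 1$ is birational to a fiber of the bounded family underlying $\mathfrak{X}_{d,1}$, in which the canonical volume takes only finitely many positive values by Noetherian induction on the base; this yields $v_d>0$.

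The heart of the inductive step is to separate two general points $x,y\in X$ by $|AK_X|$ with $A$ depending only on $n$ and $M$. Using bigness of $K_X$ one constructs, for a very general $x$, a covering family of pure non-klt centers of pairs $(X,D_t)$ with $D_t\sim_{\mathbb{Q}}\lambda_0K_X$ and $\lambda_0$ controlled, a birational family of tigers in the sense of Definition \ref{mck tiger}, and then lowers the dimension of the generic center one step at a time. At each step the generic center $V_t$, being a general member of a covering family, has $K_X|_{V_t}$ big, and by Kawamata subadjunction carries an induced log pair of log general type whose volume is bounded below in terms of the $v_d$ by the inductive boundedness (applied to the bounded family of log pairs that arises); Lemma \ref{fiber} applied on $V_t$, together with the tiebreaking trick, then bounds how far the coefficient must grow to descend to a center of strictly smaller dimension. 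After at most $n$ steps one obtains an effective rational number $t=t(n,M)$ and an effective $\mathbb{Q}$-divisor $\Delta\sim_{\mathbb{Q}}tK_X$ that is lc at $x$ with $\{x\}$ an isolated non-klt center. Adding to $\Delta$ a small effective divisor through $y$ (available because every $n$-fold of general type satisfies $\operatorname{vol}\ge v_n$ for a uniform $v_n>0$, obtained by restricting to a general member of a suitable pencil in $|rK_X|$ and combining $v_{n-1}$ with Lemma \ref{fiber}), and applying Nadel vanishing in the form of Lemma \ref{nadel} with $D=K_X$ and any integer $m>t$ (general $x,y$ avoiding all the relevant base loci and supports), one concludes that $|(m+1)K_X|$ separates $x$ and $y$. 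Taking $A=\lceil t(n,M)\rceil+2$ closes the induction.

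The step I expect to be the main obstacle is precisely the one flagged in the introduction: controlling the growth of the coefficient $t$ when the dimension of a non-klt center of dimension $\ge 2$ is lowered. The naive device of adding the two divisors that carry two competing centers, namely the lemma recalled in the introduction, is unavailable in that range, as the example above shows, so lc centers cannot be cut down that way. Instead one must operate with a single covering family of tigers and decrease its dimension using boundedness of the base together with adjunction on the generic fiber, the price at each step staying bounded precisely because the generic tiger is of general type with volume $\ge v_d$, which is where the inductive boundedness in lower dimension is indispensable. Making all the estimates in this descent uniform in $X$, so that $t=t(n,M)$ is genuinely independent of $X$, is the technical heart of the argument.
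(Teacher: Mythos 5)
This statement is quoted as \cite[Corollary 1.2]{hacon2006boundedness} and is not proved in the paper; it is an external input, so there is no proof in the text to compare yours against. That said, your blind attempt does track the actual Hacon--M$^{\mathrm{c}}$Kernan/Takayama strategy in outline: reduce to an effective birationality statement, derive a uniform positive lower bound $v_d$ on canonical volume in lower dimensions from the inductive hypothesis and constructibility of volume in families, and then induct by cutting down a covering family of non-klt centers using Kawamata subadjunction. The reduction of birational boundedness to effective birationality by bounding $\deg Y_X$ and hence the Hilbert polynomial is also correct and standard.

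However, your sketch leaves the genuine technical core essentially unargued. First, the step from ``$\mathfrak{X}_{d,1}$ is birationally bounded'' to ``canonical volume takes only finitely many positive values on the base'' tacitly invokes deformation invariance of plurigenera and constructibility of the volume function in families, which is a significant input (not just Noetherian induction) and should be flagged. Second, and more seriously, the sentence ``lowers the dimension of the generic center one step at a time\ldots the price at each step staying bounded'' is exactly the point where a proof is needed, and you acknowledge as much. The paper's whole raison d'\^etre is that the naive two-divisor cutting argument fails in codimension $\geq 2$, and your proposal replaces it with a one-line appeal to ``boundedness of the base together with adjunction on the generic fiber.'' That appeal does not by itself yield the uniform coefficient bound $t(n,M)$: one needs either the Hacon--M$^{\mathrm{c}}$Kernan lifting machinery on the generic center (their deformation-invariance/extension theorems) or, as in the present paper's Theorem~\ref{main theorem}, the intrinsic-tiger refinement of Lascini (Lemmas \ref{refinement}--\ref{construct intrinsic}) to control the weight after each descent. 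As written, your induction has the same gap the paper is repairing in Chen--Jiang's argument, just relocated to lower dimension; you would need to supply the actual mechanism by which the coefficient stays bounded when the covering lc center has codimension $\geq 2$.

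Finally, note that the cited theorem is itself a corollary of the stronger \cite[Theorem 1.1]{hacon2006boundedness}, which gives $r_n$ independent of $M$; your $A(n,M)$ formulation is logically sufficient for the boundedness corollary but weaker than what is actually known, and realizing even the weaker version still requires the same hard extension-type input.
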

We have the following type of extension theorem:
\begin{theorem}\label{extension}(\cite[Theorem 1.5]{chen2024lifting})
Let $n, d$ be two integers with $n > d > 0$ and $\mathcal{P}$ a birationally bounded set of smooth projective varieties of dimension $d$. Then there exists a positive number $t$, depending only on $d$ and $\mathcal{P}$, such that the following property holds:

Let $f : V \to T$ be a surjective fibration where $V$ and $T$ are smooth projective varieties, $\dim V = n$, $\dim T = n - d$ and $\operatorname{vol}(V) > 0$. Assume that the following conditions are satisfied:
\begin{enumerate}
    \item the general fiber $X$ of $f$ is birationally equivalent to an element of $\mathcal{P}$;
    \item there exists a positive rational number $\delta < t$ and an effective $\mathbb{Q}$-divisor $\Delta \sim_{\mathbb{Q}} \delta K_V$ such that $X$ is an irreducible component of $\operatorname{Nklt}(V, \Delta)$.
\end{enumerate}
Then the restriction map
$$H^0(V, pK_V) \to H^0(X_1, pK_{X_1}) \oplus H^0(X_2, pK_{X_2})$$
is surjective for any integer $p \geq 2$ and for any two different general fibers $X_1, X_2$ of $f$.
\end{theorem}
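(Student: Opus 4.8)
The plan is to deduce the surjectivity from Nadel vanishing, after producing on $V$ a multiplier ideal sheaf whose cosupport is exactly $X_1 \sqcup X_2$. Concretely, I will construct an effective $\mathbb{Q}$-divisor $D$ on $V$ with $D \sim_{\mathbb{Q}} \mu K_V$ for a rational $\mu < 1$, such that in a neighbourhood of $X_1 \cup X_2$ the pair $(V, D)$ is log canonical, each $X_i$ is a minimal lc center, and $X_1, X_2$ are disjoint connected components of $\operatorname{Nklt}(V, D)$. Granting this, for any integer $p \geq 2$ one has $pK_V - (K_V + D) \sim_{\mathbb{Q}} (p - 1 - \mu)K_V$ with $p - 1 - \mu \geq 1 - \mu > 0$; since $\operatorname{vol}(V) > 0$, the class $K_V$ is big, so by a standard positivity argument --- running the minimal model program, or passing to the canonical model, together with the appropriate form of Nadel vanishing --- we may treat $(p-1-\mu)K_V$ as the required nef-and-big twist and conclude $H^1(V, \mathcal{O}_V(pK_V) \otimes \mathcal{I}(V, D)) = 0$. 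Because each $X_i$ is a minimal lc center that is a connected component of $\operatorname{Nklt}(V, D)$, adjunction identifies $\mathcal{O}_V/\mathcal{I}(V,D)$ near $X_1 \cup X_2$ with $\mathcal{O}_{X_1} \oplus \mathcal{O}_{X_2}$ --- the relevant ``different'' vanishes, since each $X_i$ is a general fibre, hence has trivial normal bundle in $V$ and is there cut out by a simple normal crossing configuration. So $H^0(V, pK_V)$ surjects onto $H^0(X_1, (pK_V)|_{X_1}) \oplus H^0(X_2, (pK_V)|_{X_2})$; and since $X_1, X_2$ are general fibres, $(pK_V)|_{X_i} \cong pK_{X_i}$, giving the statement. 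This is the mechanism of Lemma \ref{nadel}.

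The content is the construction of $D$, and here the hypotheses enter. The given $\Delta \sim_{\mathbb{Q}} \delta K_V$ makes a general fibre $X$ an irreducible component of $\operatorname{Nklt}(V, \Delta)$, and --- being ``general'' is an open condition on $T$ --- the single divisor $\Delta$ does this for every general fibre at once. Pass to a log resolution and run the tie-breaking procedure (as in \cite[Lemma 2.3]{chen2017reduction}, already invoked in the definition of a birational family of tigers) fibrewise: this replaces $\Delta$ by a $\mathbb{Q}$-divisor whose coefficient $\mu$ exceeds $\delta$ by an arbitrarily small amount and for which the general fibre becomes an \emph{exceptional} lc center --- the image of a unique lc place over $V$ --- and a minimal lc center. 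Doing this near the two disjoint fibres $X_1$ and $X_2$ and combining the resulting divisors yields a candidate $D$ with $X_1 \sqcup X_2$ among the connected components of $\operatorname{Nklt}(V, D)$. The extra input needed is that no further --- necessarily horizontal --- component $W$ of $\operatorname{Nklt}(V, D)$ meets $X_1 \cup X_2$: such a $W$ would force the pair obtained by restricting $(V, D)$ to a general fibre $X_t$, whose boundary is $\mathbb{Q}$-linearly equivalent to $\mu K_{X_t}$, to be non-klt along $W \cap X_t$. But $X_t$ varies in the birationally bounded family $\mathcal{P}$ of smooth projective varieties of general type, and one can attach to $\mathcal{P}$ a constant $t_0 > 0$ such that every pair $(X, \Theta)$ with $X \in \mathcal{P}$ and $0 \leq \Theta \sim_{\mathbb{Q}} \varepsilon K_X$, $\varepsilon < t_0$, is klt; taking $t$ at most $t_0$ minus the (arbitrarily small) room used for tie-breaking and the positivity step rules this out. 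This is the one place where the birational boundedness of $\mathcal{P}$ --- hence Theorem \ref{HaconMckernanbdd} in the application to Theorem \ref{main theorem2} --- is used, and it is what makes $t$ depend only on $d$ and $\mathcal{P}$.

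The hard part is making $t$ honest, uniformly over the infinitely many --- and themselves unbounded --- total spaces $V$. The main obstacle is extracting the constant $t_0$ from the boundedness of $\mathcal{P}$: since the members of $\mathcal{P}$ are of general type, $K_X$ is big but in general not ample, so ampleness-based lower bounds on log canonical thresholds do not apply; instead I would spread out over a bounding family $\tau : Z \to T'$, take a simultaneous log resolution over $T'$, and combine lower semicontinuity of log canonical thresholds with Noetherian induction on $T'$ to produce $t_0$. Two further points require care: first, the losses incurred by tie-breaking and by the positivity step must both be absorbed into the slack $1 - \delta > 0$, which forces the positivity step to introduce only an effective divisor restricting to a small divisor on each general fibre --- here one again uses that $X_1, X_2$ are general, so a general member of the relevant linear system meets each $X_i$ in a proper subset and may be taken with small coefficient, and the passage to a model with nef (and big) canonical class must be arranged so as to preserve the identification of $X_1, X_2$ as the relevant minimal lc centers; second, one must verify by a local computation at the generic points of $X_1, X_2$ that $\mathcal{O}_V/\mathcal{I}(V, D) \cong \mathcal{O}_{X_1} \oplus \mathcal{O}_{X_2}$ near $X_1 \cup X_2$ holds exactly, using that there each $X_i$ is an exceptional lc center with trivial normal bundle, so that the restricted pair has trivial different and is klt.
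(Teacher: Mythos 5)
This theorem is quoted from \cite[Theorem~1.5]{chen2024lifting}; the present paper does not prove it, so there is no internal argument to compare against. Assessed on its own, your sketch has the right guiding heuristic (Nadel-type vanishing plus boundedness of $\mathcal{P}$), but several of its load-bearing steps do not work as stated.

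The most serious problem is the claim that near $X_1\cup X_2$ one has $\mathcal{O}_V/\mathcal{I}(V,D)\cong\mathcal{O}_{X_1}\oplus\mathcal{O}_{X_2}$ once each $X_i$ is a minimal, exceptional lc center of $(V,D)$. For an lc center of codimension one this is correct, but here $X_i$ has codimension $n-d$, which may be $\geq 2$, and for a minimal lc center of higher codimension the multiplier ideal is in general a \emph{proper} subsheaf of the radical ideal $\mathcal{I}_{X_i}$, and the quotient $\mathcal{O}_V/\mathcal{I}(V,D)$ need not be reduced, let alone equal to $\mathcal{O}_{X_i}$. Your appeal to "trivial different" and "trivial normal bundle" does not repair this: the different in Kawamata subadjunction lives on the center and controls $(K_V+D)|_{X_i}$ versus $K_{X_i}$, not the local scheme structure of the non-klt locus. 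So Nadel vanishing applied directly, in the manner of Lemma~\ref{nadel}, gives surjectivity onto $H^0$ of the non-klt \emph{scheme}, which is not the target $H^0(X_1,pK_{X_1})\oplus H^0(X_2,pK_{X_2})$. To extend sections from a higher-codimension minimal lc center one needs the Ambro--Fujino machinery for quasi-log structures (or an inductive Koll\'ar-type argument), and even then the target of the restriction map has to be matched to $pK_{X_i}$ via subadjunction with a controlled boundary on $X_i$, which is precisely where the rest of the hypotheses enter and which your sketch does not address.

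Two further steps are asserted but not justified. First, the constant $t_0$ you extract from $\mathcal{P}$ --- ``every pair $(X,\Theta)$ with $X\in\mathcal{P}$, $0\le\Theta\sim_{\mathbb{Q}}\varepsilon K_X$, $\varepsilon<t_0$, is klt'' --- is a uniform positive lower bound for the global log canonical threshold (the $\alpha$-invariant) of $K_X$ along the bounded family. This is a genuine theorem, not a formality: $K_X$ is only big, not ample, so lct lower bounds coming from Seshadri constants or very ampleness do not apply directly, and the Noetherian/semicontinuity argument you gesture at must be carried out in detail, after resolving along a bounding family. Second, the reduction of Nadel vanishing from ``nef and big'' to ``big'' by absorbing the effective part of a Kodaira decomposition into the boundary is not automatic: the extra effective divisor you add might spoil the property that $X_1,X_2$ are isolated lc centers, and keeping track of this requires exactly the kind of careful bookkeeping in the slack $1-\delta$ that you defer. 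Finally, the tie-breaking step does not in general preserve the center: it typically shrinks the lc center to a proper subvariety of $X_i$, whereas the hypothesis only asserts that $X_i$ is an irreducible component of $\operatorname{Nklt}(V,\Delta)$ --- you must first verify that $X_i$ is a \emph{pure} lc center before tie-breaking to an exceptional center of the same dimension is possible. These gaps are exactly the content of the cited reference, and the sketch as written does not close them.
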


\section{A birational family of tigers with the separation property}
In this section, we recall the tool introduced by Lacini in \cite[Section 3]{lacini2023boundedness}.
\subsection{Separation Property}
\begin{definition}
    Let $X$ be a normal projective variety of dimension $n$ and let $D$ be a $\mathbb{Q}$-Cartier divisor. Let $f: Y \rightarrow B$ be a birational family of tigers of weight $w$ relative to $D$ and let $\pi: Y \rightarrow X$ be the associated birational morphism. We say that this family has the separation property if there is a non-empty open subset $U \subseteq Y$ such that for all $y_1, y_2 \in U$ there exists a $\mathbb{Q}$-divisor $D_{y_1, y_2}$ in $X$, which, after possibly switching $y_1$ and $y_2$, has the following properties:
    \begin{enumerate}
        \item $D_{y_1, y_2} \sim_{\mathbb{Q}} \lambda D$ with $\lambda<\frac{3}{w}$.
        \item $(X, D_{y_1, y_2})$is lc but not klt at $\pi\left(y_1\right)$.
        \item $(X, D_{y_1, y_2})$ has the unique lc center $V$ at $\pi\left(y_1\right)$ and $V \subseteq \pi\left(f^{-1}\left(f\left(y_1\right)\right)\right)$.
        \item $\left(X, D_{y_1, y_2}\right)$ is not klt at $\pi\left(y_2\right)$.
    \end{enumerate}
\end{definition}
\begin{remark}
    Note that $V$ can be strictly contained in $\pi\left(f^{-1}\left(f\left(y_1\right)\right)\right)$.
\end{remark}
\begin{proposition}\label{birational}
Let $X$ be a smooth projective variety and $D$ an integral big divisor. If there is a birational family of tigers of weight $w$ and dimension zero relative to $D$ that has the separation property, then the linear system $\lvert K_X+mD \rvert$ induces a birational map for any integer $m> \frac{3}{w}$. 
\end{proposition}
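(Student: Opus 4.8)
The plan is to deduce birationality of $\phi_{\lvert K_X+mD\rvert}$ from its injectivity on a dense open subset of $X$, obtaining the latter by feeding the pairs of points produced by the separation property into the Nadel-type criterion of Lemma \ref{nadel}.

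First I would do the bookkeeping. Write $\phi:=\phi_{\lvert K_X+mD\rvert}$, and assume $\lvert K_X+mD\rvert\neq\emptyset$ (otherwise there is nothing to prove). Since $D$ is big we may fix a decomposition $D\sim_{\mathbb{Q}}A+B$ with $A$ an ample $\mathbb{Q}$-divisor and $B$ effective (Kodaira's lemma); set $Z_0:=\operatorname{Supp}B\cup\operatorname{Bs}(K_X+mD)$, a proper closed subset of $X$. Let $\pi:Y\to X$ and $f:Y\to B$ be the morphisms of the given family of tigers and let $U\subseteq Y$ be the open subset furnished by the separation property. Since shrinking $U$ only makes the separation property easier to hold, I would shrink $U$ so that $f(U)$ lies in the locus of $B$ over which the fibers of $f$ are the tigers $V_t$, so that $\pi$ restricts to an isomorphism of $U$ onto an open subset $X_0:=\pi(U)$ of $X$, and so that $X_0\cap Z_0=\emptyset$. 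Then $X_0$ is a dense open subset of $X$ on which $\phi$ is a morphism.

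The key step is to show $\phi|_{X_0}$ is injective. Given distinct points $x_1,x_2\in X_0$, set $y_i:=(\pi|_U)^{-1}(x_i)\in U$ and apply the separation property to the pair $(y_1,y_2)$: after possibly switching $y_1$ and $y_2$, one obtains an effective $\mathbb{Q}$-divisor $\Delta:=D_{y_1,y_2}\sim_{\mathbb{Q}}\lambda D$ with $\lambda<3/w$ such that $(X,\Delta)$ is lc at $x_1$, not klt at $x_2$, and the unique lc center $V$ of $(X,\Delta)$ through $x_1$ satisfies $V\subseteq\pi\big(f^{-1}(f(y_1))\big)$. This is where dimension zero enters: $\pi\big(f^{-1}(f(y_1))\big)$ is then a finite set, so the irreducible lc center $V$, which contains $x_1$, must equal $\{x_1\}$; hence $\{x_1\}$ is an isolated lc center of $(X,\Delta)$. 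Since $m$ is a positive integer with $m>3/w>\lambda$ and $x_1,x_2\notin Z_0=\operatorname{Supp}B\cup\operatorname{Bs}(K_X+mD)$, Lemma \ref{nadel}, applied with $x=x_1$ and $y=x_2$, shows that $\lvert K_X+mD\rvert$ separates $x_1$ and $x_2$, so $\phi(x_1)\neq\phi(x_2)$. This establishes injectivity of $\phi|_{X_0}$; since $X_0$ is a dense open subset of the normal projective variety $X$ and we work in characteristic zero, $\phi$ is then generically one-to-one, hence induces a birational map onto its image. (A posteriori $h^0(X,K_X+mD)\geq\dim X+1$, so the assumption $\lvert K_X+mD\rvert\neq\emptyset$ is harmless.)

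I do not expect a genuine obstacle: the content lies entirely in assembling the cited results. The one point requiring care --- and exactly the feature that saves the argument where the codimension-$>1$ cutting-down method discussed in the introduction fails --- is that the separation property provides the input of Lemma \ref{nadel} for \emph{every} pair of points of the fixed open set $U$, hence injectivity of $\phi$ on an entire dense open set; separation of merely a general pair of points would be far too weak (a double cover separates a general pair of points). The rest is the routine bookkeeping of shrinking $U$, transporting it through the birational morphism $\pi$ into the complement of $\operatorname{Supp}B\cup\operatorname{Bs}(K_X+mD)$, and observing that dimension zero collapses the relevant lc center to a single point.
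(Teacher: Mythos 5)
Your proof is correct and takes exactly the paper's approach: the paper's proof of this proposition is literally the one sentence that it follows directly from Lemma \ref{nadel}, and your argument carefully unpacks that sentence, in particular correctly observing that the dimension-zero hypothesis forces the unique lc center $V$ through $\pi(y_1)$ to be the isolated point $\{\pi(y_1)\}$, which is precisely the input Lemma \ref{nadel} requires. The one imprecision is your treatment of the case $\lvert K_X+mD\rvert=\emptyset$: saying there is ``nothing to prove'' is wrong (the conclusion would then be false), and the closing ``a posteriori'' parenthetical is circular as written; the clean fix is that the Nadel vanishing argument underlying Lemma \ref{nadel}, applied with $y_1=y_2$ and with $\Delta$ replaced by $\Delta+(m-\lambda)B$ so that $mD-\Delta-(m-\lambda)B\sim_{\mathbb{Q}}(m-\lambda)A$ is ample, already yields the surjection $H^0\big(X,\mathcal{O}_X(K_X+mD)\big)\twoheadrightarrow \mathbb{C}_{\pi(y_1)}$, so the linear system is non-empty and $\operatorname{Bs}(K_X+mD)$ is a proper closed subset that $X_0$ can be shrunk to avoid.
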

\begin{proof}
    This follows directly from Lemma \ref{nadel}.
\end{proof}

\subsection{Intrinsic Tigers} 
In order to obtain a birational family of tigers with the separation property, we construct a concrete birational family of tigers using base loci. It will satisfy the separation property.
\begin{definition}\label{base loci}
Let $X$ be a smooth projective variety of dimension $n$ and $D$ a $\mathbb{Q}$-divisor on $X$. For a positive integer $k$ and a point $x \in X$ we define
$$S_x^k(D)=\left\{H \in \lvert \lfloor D\rfloor \rvert \text { such that }\operatorname{mult}_x H \geq k\right\}$$
and define $B_x^k(D)$ to be the union of the irreducible components of $\bigcap_{H \in S_x^k(D)} H$ passing through $x$. 

Choose any affine open set $U \subseteq X$ and consider the diagonal $\Delta \subseteq X \times U$. Then we define
$$\mathcal{S}^k(D)=\left\{H \in\left|\operatorname{pr}_1^*(\lfloor D \rfloor)\right| \text { such that } \operatorname{mult}_{\Delta} H \geq k\right\}
$$ and define $\mathcal{B}^k(D)$ to be the union of the irreducible components of $\bigcap_{H \in \mathcal{S}^k(D)} H$ containing $\Delta$. 
\end{definition}
\begin{remark}\label{remark}
By the following lemma, after possibly shrinking $U$, we have that
$$
B_x^k(D)=\operatorname{pr}_1\left(\mathcal{B}^k(D) \cap (X \times\{x\})\right)
$$
for all $x \in U$. By the above description, we may assume that $\operatorname{dim} B_x^k(D)$ is a constant for all $x \in U$, and we denote its value by $d(D, k)$.
\end{remark}
\begin{lemma}\label{mult} (\cite[Corollary 5.2.12]{lazarsfeld1})
    Let $p: X \rightarrow T$ be a morphism of smooth varieties and let $D$ be a $\mathbb{Q}$-divisor on $X$. Then for a general point $t \in T$,
$$
\operatorname{mult}_y(X, D)=\operatorname{mult}_y\left(X_t, D_t\right)
$$
for every $y \in X_t$, where $X_t$ is the fiber of $X$ over $t$ and $D_t$ is the restriction of $D$ to $X_t$.
\end{lemma}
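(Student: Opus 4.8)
The plan is first to reduce to the case of a prime divisor: since $\operatorname{mult}_y(X,-)$ is linear in the divisor and only finitely many components occur, it suffices to prove the statement for a prime divisor $D$, and if $D$ does not dominate $T$ then for general $t$ the fibre $X_t$ is disjoint from $D$ and both sides vanish. So assume $D$ is prime and dominates $T$. Since we are in characteristic zero, $\mathbb{C}(T)$ is perfect and the generic fibre $X_\eta$, being a localization of the smooth variety $X$, is regular, hence smooth over $\mathbb{C}(T)$; spreading this out, there is a dense open $T^\circ \subseteq T$ such that, for $t \in T^\circ$, the morphism $p$ is smooth along the whole fibre $X_t$, the fibre $X_t$ is smooth, and $D_t$ is a well-defined effective divisor (as $D \not\supseteq X_t$ for general $t$). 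For such $t$ the inequality $\operatorname{mult}_y(X_t,D_t) \ge \operatorname{mult}_y(X,D)$ holds for every $y \in X_t$ for the formal reason that the surjection $\mathcal{O}_{X,y} \twoheadrightarrow \mathcal{O}_{X_t,y}$ carries a local equation of $D$ to one of $D_t$ and $\mathfrak{m}_{X,y}^k$ into $\mathfrak{m}_{X_t,y}^k$. The content of the lemma is thus the opposite inequality for general $t$.

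To organize that, introduce the multiplicity strata $Z_c = \{\, y \in X : \operatorname{mult}_y(X,D) \ge c \,\}$, which are closed and cut out locally by a local equation $g$ of $D$ together with its partial derivatives of order $< c$; similarly set $\mathcal{Z}_c = \{\, y \in X : \operatorname{mult}_y(X_{p(y)},D_{p(y)}) \ge c \,\}$ over the locus where $p$ is smooth, cut out by the same functions but using only the derivatives of $g$ in fibre directions of a coordinate system adapted to $p$; these are closed as well, only finitely many $c$ are relevant, and the easy inequality gives $Z_c \subseteq \mathcal{Z}_c$. One must show $\mathcal{Z}_c \cap X_t = Z_c \cap X_t$ for general $t$, i.e. that the jump locus $\bigcup_c(\mathcal{Z}_c \setminus Z_c)$ does not dominate $T$. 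Choosing at a point $y$ coordinates $x_1,\dots,x_{n-e},u_1,\dots,u_e$ with the $u_j$ pulled back from $T$ (so $X_{p(y)} = \{u=0\}$ locally), the order of $g(x,0)$ exceeds $c' := \operatorname{mult}_y(X,D)$ precisely when the degree $c'$ part of $g$, that is the form defining the tangent cone $C_yD$, vanishes identically on $T_yX_{p(y)} = \{u=0\}$; hence the jump locus is exactly $\mathcal{J} = \{\, y : T_yX_{p(y)} \subseteq C_yD \,\}$, and everything comes down to showing $\mathcal{J}$ does not dominate $T$.

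The plan is to prove this stratum by stratum along $X = \bigsqcup_d S_d$, $S_d = \{\operatorname{mult}_y(X,D) = d\}$. On the open stratum $S_1 = D \setminus \operatorname{Sing}(D)$ the tangent cone is the tangent hyperplane, $C_yD = T_yD$, and the condition $T_yX_{p(y)} \subseteq T_yD$ says exactly that the morphism $D \setminus \operatorname{Sing}(D) \to T$ fails to be submersive at $y$; by generic smoothness in characteristic zero the image of this critical locus is a proper closed subset of $T$, so $\mathcal{J} \cap S_1$ does not dominate $T$. (This alone already yields $\operatorname{Sing}(D_t) = \operatorname{Sing}(D) \cap X_t$ for general $t$, i.e. the case $c = 2$.) The deeper strata $S_d \subseteq \operatorname{Sing}(D)$ with $d \ge 2$ are the heart of the matter: there the tangent cone is a genuine degree $d$ cone varying over $S_d$, and one must show that a general fibre of $p$ --- which, in contrast to a general hyperplane section of $X$, has no freedom in how its tangent space sits inside $T_yX$ --- is nevertheless not tangent-cone-contained at any of its points. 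I expect this to be the main obstacle; it is precisely the ``generic equisingularity of the fibres'' phenomenon, which one proves in characteristic zero by generic smoothness applied along a suitable stratification (after resolving $\operatorname{Sing}(D)$ and comparing the absolute and fibrewise tangent-cone data), and it is also the point at which --- as the example in the Introduction illustrates --- no elementary cutting-down of centres can substitute for the argument. Of course, this statement is exactly \cite[Corollary 5.2.12]{lazarsfeld1}, so one may alternatively simply invoke the semicontinuity theory of multiplicities developed there.
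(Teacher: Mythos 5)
The paper gives no proof of this lemma; it is cited directly from \cite[Corollary 5.2.12]{lazarsfeld1}, so there is no in-paper argument to compare against. Your proposal correctly sets up the problem --- the reduction to a prime $D$ dominating $T$, the trivial inequality $\operatorname{mult}_y(X_t,D_t)\ge\operatorname{mult}_y(X,D)$, the identification of the jump locus $\mathcal{J}=\{y:T_yX_{p(y)}\subseteq C_yD\}$, and a clean disposal of the codimension-one stratum via generic smoothness of $D\smallsetminus\operatorname{Sing}(D)\to T$ --- but it explicitly declines to prove the strata with $d\ge 2$, which you call ``the heart of the matter,'' and closes by invoking the very reference the paper cites. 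As a standalone argument the proposal therefore has a genuine gap, one you yourself acknowledge.

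That gap is real but smaller than your discussion suggests, and the analogy you draw to the Introduction's lc-center example is misleading (that example is about intersecting non-klt loci of two different pairs, a different phenomenon). The missing input is the following elementary fact: if $W$ is a smooth locally closed subvariety along which $\operatorname{mult}_z(X,D)\ge m$, then a local equation $g$ of $D$ lies in $I_W^m$; equivalently, in coordinates $(w,v)$ with $W=\{v=0\}$ near $y$, every monomial of $g$ has $v$-degree at least $m$, so the degree-$m$ leading form $g_m$ at $y\in W$ is a form in $v$ alone, i.e.\ depends only on directions transverse to $T_yW$. Granting this, stratify $D$ into finitely many smooth locally closed pieces of constant multiplicity, and suppose some irreducible $V\subseteq\mathcal J$ dominated $T$; a general $y\in V$ lies in such a stratum $W$ with $V$ locally contained in $W$, and generic smoothness of $p|_V$ gives $T_yV+T_yX_{p(y)}=T_yX$, so $T_yX_{p(y)}$ surjects onto the $v$-directions, forcing $g_m|_{T_yX_{p(y)}}\ne 0$ and hence $y\notin\mathcal J$, a contradiction. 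This is precisely your $S_1$ argument rerun on each stratum once the shape of $g_m$ is pinned down; you should state and prove the $g\in I_W^m$ claim rather than gesture at ``generic equisingularity'' and retreat to the citation.
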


\begin{definition}\label{intrinsic}
    Let $X$ be a smooth projective variety of dimension $n$, $D$ a $\mathbb{Q}$-divisor on $X$ and $w>0$ a positive rational number. We say that a morphism of smooth projective varieties $f: Y \rightarrow B$ is a birational family of intrinsic tigers of weight $w$ relative to $D$ if there are positive integers $k$ and $l>0$, a nonempty open subset $U\subseteq Y$ and a birational morphism $\pi: Y \rightarrow X$ such that
    
(1) $d(lD, k)<n$.

(2) $\pi\left(f^{-1}(f(y)\right)=B_{\pi(y)}^k(l D)$ for $y\in U$.

(3) Every element of $S_{\pi(y)}^k(lD)$ has multiplicity at least $wl(n-d(lD, k))$ along $B_{\pi(y)}^k(lD)$.
\end{definition}
Note that by connectedness of $B_x^k$ and generic smoothness, $B_{\pi(y)}^k(l D)$ is irreducible for $y\in U$.

\begin{lemma}
   Let $f$ be a birational family of intrinsic tigers of weight $w$ relative to $D$, with $l$ and $k$ as in Definition \ref{intrinsic}.

   When $wl>1$, $f$ is a birational family of tigers of weight $w$ relative to $D$ as in definition \ref{mck tiger}. When $wl>3$, $f$ has the separation property.
\end{lemma}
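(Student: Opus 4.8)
The plan is to produce, from a birational family of intrinsic tigers, an honest birational family of tigers in the sense of Definition \ref{mck tiger}, and then to verify the separation property directly from the definition of $B_x^k$. First I would unwind the data: we have positive integers $k,l$, an open set $U\subseteq Y$, and a birational morphism $\pi:Y\to X$, with $d:=d(lD,k)<n$, and for $y\in U$ the fiber $\pi(f^{-1}(f(y)))=B_{\pi(y)}^k(lD)$, with every $H\in S_{\pi(y)}^k(lD)$ having multiplicity at least $wl(n-d)$ along $B_{\pi(y)}^k(lD)$. Recall also from Remark \ref{remark} that $B_x^k(lD)=\operatorname{pr}_1(\mathcal{B}^k(lD)\cap(X\times\{x\}))$ for $x\in U$ after shrinking, so these loci genuinely fit into a flat family parametrized by $B$; this gives items (1) and (2) of Definition \ref{mck tiger} by taking the Hilbert scheme morphism induced by $\mathcal{B}^k(lD)$ and normalizing, and item (6) since $\dim B_{\pi(y)}^k(lD)=d$ for $y\in U$. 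Item (4) is the hypothesis that $\pi$ is birational.

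The substance is to produce, for $y\in U$, an effective $\mathbb{Q}$-divisor $D_y\sim_{\mathbb{Q}}\frac1w D$ making $\pi(f^{-1}(f(y)))=B_{\pi(y)}^k(lD)$ a pure lc center of $(X,D_y)$; this is item (3) of Definition \ref{mck tiger} and uses the hypothesis $wl>1$. Write $x=\pi(y)$ and $V=B_x^k(lD)$, of codimension $c:=n-d\geq 1$. The idea is the standard one from \cite{mckernan2002boundedness}: pick general members $H_1,\dots,H_c\in S_x^k(lD)\subseteq|\lfloor lD\rfloor|$ and set $D_y=\frac{1}{wl}\sum_{i=1}^c a_i H_i$ for suitable coefficients $a_i\in(0,1]$ summing so that $D_y\sim_{\mathbb{Q}}\frac{1}{w}D$ — here I would take all $a_i$ equal to $c^{-1}\cdot(\text{the number matching }\lfloor lD\rfloor\text{ to }D)$, adjusting via $wl>1$ so that each coefficient is $\leq 1$; since each $H_i$ has multiplicity $\geq wl\cdot c$ along $V$, after dividing by $wl$ the divisor $D_y$ has multiplicity $\geq c$ along $V$, which is exactly the threshold needed for $V$ (of codimension $c$) to lie in $\operatorname{Nklt}(X,D_y)$. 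To upgrade "non-klt along $V$" to "$V$ is a \emph{pure} lc center," I would invoke the tiebreaking lemma \cite[Lemma 2.3]{chen2017reduction} (available since $D$ is big), perturbing $w$ slightly, exactly as noted after Definition \ref{mck tiger}; generality of the $H_i$ together with $\bigcap_i H_i\supseteq V$ and the definition of $B_x^k$ as the union of components through $x$ of $\bigcap_{H\in S_x^k}H$ ensures that no smaller center through $x$ is forced and that $V$ itself is cut out, so $V$ is the lc center we want. The main obstacle here is bookkeeping the rational coefficients so that simultaneously $D_y\sim_{\mathbb{Q}}\frac1wD$ holds on the nose, each coefficient stays in $(0,1]$, and the multiplicity-$\geq c$ bound survives division by $wl$; this is where $wl>1$ is used, and one must be a little careful that replacing $\lfloor lD\rfloor$ by $lD$ only changes things by an effective divisor that can be absorbed.

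Finally, for the separation property (hypothesis $wl>3$), I would run essentially the same construction but bootstrap it to separate two general points $y_1,y_2\in U$ with $x_i=\pi(y_i)$. For a general $H\in S_{x_1}^k(lD)$ one also has, for general $x_1$, that such $H$ can be chosen to pass through $x_2$ as well, or else one produces an auxiliary member of $|\lfloor lD\rfloor|$ singular to high order at $x_2$ by a dimension count on the linear system (this is the step analogous to \cite[Lemma 4.2]{mckernan2002boundedness}); adding a small multiple of such a divisor to $D_{y_1}$ makes the pair non-klt at $x_2$ while the extra weight is $<\frac{2}{w}$, so the total is $D_{y_1,y_2}\sim_{\mathbb{Q}}\lambda D$ with $\lambda<\frac{3}{w}$ — this is precisely where the bound $wl>3$, equivalently $\lambda<3/w$, enters. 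The conditions (1)–(4) of the separation property then read off: (1) from the coefficient bookkeeping just described, (2) and (3) from the pure-lc-center analysis at $x_1$ (with $V=B_{x_1}^k(lD)\subseteq\pi(f^{-1}(f(y_1)))$, the containment possibly strict as in the Remark), and (4) from the added divisor singular at $x_2$. I expect the genuinely delicate point to be, again, controlling coefficients so that lc-ness at $x_1$ is preserved when the auxiliary divisor is thrown in — one may need to first shrink the coefficients at $x_1$ by a factor close to $1$ and then use the slack between $2/w$ and $3/w$ — but no new geometric input beyond Nadel vanishing, tiebreaking, and the definition of $B_x^k$ is required.
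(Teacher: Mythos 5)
The paper proves this lemma simply by citing \cite[Lemma 3.3, Lemma 3.4]{lacini2023boundedness} with no argument of its own, so yours is a reconstruction rather than a comparison. The family-of-tigers part is in the right spirit: scale a general $H\in S_x^k(lD)$ by $\frac{1}{wl}$ so that the multiplicity along $V=B_x^k(lD)$ reaches the codimension threshold $n-d(lD,k)$, absorb $\frac{1}{wl}\bigl(lD-\lfloor lD\rfloor\bigr)$ as an effective remainder to land on $\frac{1}{w}D$, tiebreak to promote $V$ to a pure lc center (this is the remark after Definition \ref{mck tiger}), and read items (1), (2), (4), (6) of Definition \ref{mck tiger} off from the family $\mathcal{B}^k(lD)$ and the given $\pi$. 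One simplification: a single general $H$ already has $\operatorname{mult}_V H\geq wl(n-d)$, so $\frac{1}{wl}H$ alone meets the codimension bound; you do not need $c=n-d$ separate members $H_1,\dots,H_c$.

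The separation part is where the proposal as written has a gap. You invoke a case distinction (``such $H$ can be chosen to pass through $x_2$ as well, or else one produces an auxiliary member $\dots$ by a dimension count'') and point to \cite[Lemma 4.2]{mckernan2002boundedness} as the analogous step; but that cutting-down-lc-centers mechanism is precisely what this paper is replacing, since it fails in codimension greater than one (this is the content of the example in the introduction). The intrinsic construction sidesteps the issue entirely: for general $y_1,y_2\in U$, both $x_1=\pi(y_1)$ and $x_2=\pi(y_2)$ already carry members of $S_{x_1}^k(lD)$ and $S_{x_2}^k(lD)$ with the required multiplicity bound, by the uniformity of the data over $U$ (Remark \ref{remark} and Lemma \ref{mult}); there is no alternative case and no further dimension count to run. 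The $3/w$ budget is then $\frac1w$ spent at $x_1$, $\frac1w$ spent at $x_2$, and strictly less than $\frac1w$ of slack for the tiebreaking that isolates the lc center $V\subseteq\pi(f^{-1}(f(y_1)))$ at $\pi(y_1)$ while keeping $\pi(y_2)$ non-klt --- exactly why the hypothesis here is $wl>3$ rather than $wl>1$. Your flagged ``delicate point'' (that the divisor singular at $x_2$ not destroy lc-ness at $x_1$) is genuine, but it is handled by taking a general member of $S_{x_2}^k(lD)$, which imposes no extra singularity at the general point $x_1$; it does not require first shrinking the coefficient at $x_1$.
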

\begin{proof}
   This follows directly from \cite[Lemma 3.3, Lemma 3.4]{lacini2023boundedness}.
\end{proof}
\begin{remark}
    Indeed, $w$ will be sufficiently large later in practice so that the condition $wl>3$ will be automatically satisfied.
\end{remark}

For the classical case, when $D$ has sufficiently large volume, there is a birational family of tigers of large weight as in \cite[Lemma 3.3]{mckernan2002boundedness}. This property also holds for intrinsic tigers: 

\begin{lemma}(\cite[Lemma 3.10]{lacini2023boundedness})\label{construct intrinsic}
    Let $X$ be a smooth projective variety of dimension $n$ and let $D$ be a big $\mathbb{Q}$-divisor. If $\mathrm{vol}(D)>\left(2^n a\right)^n$ for some positive rational number $a>0$, then there is a birational family of intrinsic tigers of weight $\frac{a}{n}$ relative to $D$.
\end{lemma}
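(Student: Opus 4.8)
The plan is to build the family from the base loci $B^k_x(lD)$ of Definition \ref{base loci} themselves, to pin down the two integers $l$ and $k$ by an asymptotic Riemann-Roch estimate together with a pigeonhole argument, and to check the multiplicity bound (3) of Definition \ref{intrinsic} by a local analysis along the base locus. To fix $l$: since $D$ is big, for $l$ sufficiently divisible the map $\phi_{\lvert\lfloor lD\rfloor\rvert}$ is birational, so $d(lD,1)=0$. Fix a small rational $\eta>0$ with $\operatorname{vol}(D)>(2^na+2\eta)^n$. Since $h^0(X,\mathcal O_X(\lfloor lD\rfloor))=\tfrac{\operatorname{vol}(D)}{n!}l^n+o(l^n)$, for $l\gg0$ one has, with $k_0:=\lfloor(2^na+\eta)l\rfloor$, that $h^0(X,\mathcal O_X(\lfloor lD\rfloor))>\binom{k_0+n-1}{n}$; as imposing multiplicity $\ge k_0$ at a smooth point is $\binom{k_0+n-1}{n}$ linear conditions, for \emph{every} $x\in X$ there is an $H\in\lvert\lfloor lD\rfloor\rvert$ with $\operatorname{mult}_xH\ge k_0$, so $S^k_x(lD)\ne\varnothing$ and $d(lD,k)<n$ for all $1\le k\le k_0$; moreover $k_0\ge2^nal$ for $l\gg0$.

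The technical heart is the following estimate, which I call $(\ast)$: for general $x$ and every $1\le k\le k_0$, writing $d=d(lD,k)$ and $k^-=\max\bigl(\{0\}\cup\{\,k':1\le k'<k,\ d(lD,k')<d\,\}\bigr)$, every $H\in S^k_x(lD)$ satisfies $\operatorname{mult}_{B^k_x(lD)}H\ge k-k^-$. When $d=0$ this is just the definition of $S^k_x(lD)$. For $d\ge1$ I would argue by contradiction: write $Z=B^k_x(lD)$ and suppose some $H\in S^k_x(lD)$ has $e:=\operatorname{mult}_ZH<k-k^-$. From $S^k_x(lD)\subseteq S^{k-e}_x(lD)$ one gets $B^{k-e}_x(lD)\subseteq Z$; and since $k-e>k^-$ while $\{\,k':d(lD,k')<d\,\}$ is an initial segment (as $d(lD,\cdot)$ is nondecreasing), $d(lD,k-e)\ge d=\dim Z$, so $B^{k-e}_x(lD)$ has the same dimension as $Z$. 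On the other hand, restricting $H$ to a general $(n-d)$-dimensional slice through $x$, the leading term of $H$ along $Z$ is a nonzero section of $\mathcal O_X(lD)|_Z\otimes\operatorname{Sym}^eN^\vee_{Z/X}$ that must vanish at $x$ to order $\ge k-e\ge1$; combined with $Z$ being the \emph{full} intersection $\bigcap_{H'\in S^k_x(lD)}H'$, this yields members of $S^{k-e}_x(lD)$ that cut $Z$ down, forcing $\dim B^{k-e}_x(lD)<d$ --- a contradiction. Lemma \ref{mult} is what makes the slicing compatible across the family parametrised by $x$. I expect $(\ast)$ --- forcing \emph{every} element of $S^k_x(lD)$, not just a general one, to be this singular along $B^k_x(lD)$ --- to be the main obstacle; it is the local computation underlying M$^c$Kernan's tiger construction, and it is here that the factor $2^n$ in the hypothesis is spent.

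To finish, choose $k$ by counting. On $\{1,\dots,k_0\}$ the function $k\mapsto d(lD,k)$ is nondecreasing with values in $\{0,\dots,n-1\}$, so it partitions $\{1,\dots,k_0\}$ into ``dimension bands''; for each attained value $d$ let $\beta_d$ be the width of its band, so $\sum_d\beta_d=k_0$. If $\beta_d<\tfrac an l(n-d)$ for every attained $d$, then
$$k_0=\sum_d\beta_d<\frac an\,l\sum_{d=0}^{n-1}(n-d)=\frac{a(n+1)}{2}\,l,$$
contradicting $k_0\ge2^nal>\tfrac{a(n+1)}{2}l$ (valid because $2^{n+1}>n+1$ for all $n\ge1$). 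Hence some attained value $d<n$ has band width $\beta_d\ge\tfrac an l(n-d)$; take $k$ to be the top of that band, so $k-k^-=\beta_d$. By $(\ast)$, every $H\in S^k_x(lD)$ then satisfies $\operatorname{mult}_{B^k_x(lD)}H\ge\beta_d\ge\tfrac an\,l\,(n-d(lD,k))$ with $d(lD,k)=d<n$, which are conditions (1) and (3) of Definition \ref{intrinsic} for weight $w=\tfrac an$. For the family: by Remark \ref{remark} the loci $B^k_x(lD)=\operatorname{pr}_1\bigl(\mathcal B^k(lD)\cap(X\times\{x\})\bigr)$ vary algebraically over an open $U$ and are irreducible for general $x$, so taking $B$ to be the normalisation of the image of $U\to\operatorname{Hilb}(X)$, $f\colon Y\to B$ the normalisation of the universal family, and $\pi\colon Y\to X$ the natural morphism (resolving $Y$ and $B$ if smoothness is insisted on), a dimension count --- using that $\{x':B^k_{x'}(lD)=B^k_x(lD)\}$ is dense in $B^k_x(lD)$ for general $x$ --- gives $\dim Y=n$ and $\pi$ birational, which is condition (2). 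Thus $f\colon Y\to B$ is a birational family of intrinsic tigers of weight $\tfrac an$ relative to $D$.
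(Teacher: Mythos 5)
The paper does not reprove this lemma --- it cites it from Lascini --- so your proposal is being measured against the argument in \cite[Lemma 3.10]{lacini2023boundedness}. Your set-up (fix $l$ large and divisible, use Riemann--Roch to make $S^{k_0}_x(lD)\neq\varnothing$ for $k_0\approx 2^nal$, study how $d(lD,k)$ jumps, and promote the family of base loci $B^k_x(lD)$ via $\mathcal B^k$) is the right framework and matches the intended construction. The problem is with the technical claim you isolate as $(\ast)$, and the pigeonhole you build on it.

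The proof of $(\ast)$ is not complete. You correctly deduce that $B^{k-e}_x(lD)=Z$, and the local computation that the degree-$e$ leading term of $H$ along $Z$ is a nonzero section $\sigma\in H^0\bigl(Z,\operatorname{Sym}^eN^\vee_{Z/X}\otimes\mathcal O_X(\lfloor lD\rfloor)|_Z\bigr)$ vanishing at $x$ to order $\geq k-e$ is fine. But the next sentence --- ``this yields members of $S^{k-e}_x(lD)$ that cut $Z$ down'' --- is unjustified, and is precisely where the whole weight of the lemma sits. The section $\sigma$ lives on $Z$, not on $X$, and there is no mechanism offered for lifting it (or anything derived from it) to a divisor in $\lvert\lfloor lD\rfloor\rvert$ with multiplicity $\geq k-e$ at $x$ that fails to contain $Z$. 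In fact such a lift is exactly what would contradict $B^{k-e}_x(lD)=Z$, so it cannot be produced; the sought contradiction must come from elsewhere (in codimension one one can subtract $eZ$ and use an intersection-number bound on $H'\vert_Z$, but when $\operatorname{codim}Z>1$ there is no divisorial decomposition $H=eZ+H'$ to work with, and your slicing argument does not replace it). As written, $(\ast)$ is asserted, not proved.

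There is also strong circumstantial evidence that $(\ast)$ is too strong in the form you state it. If it held, your pigeonhole would only require $k_0>\tfrac{a(n+1)}{2}\,l$, whereas you have $k_0\approx 2^nal$, i.e.\ an exponential amount of slack --- your argument would improve the hypothesis $\operatorname{vol}(D)>(2^na)^n$ to roughly $\operatorname{vol}(D)>\bigl(\tfrac{(n+1)a}{2}\bigr)^n$. The factor $2^n$ in the actual lemma comes from an iterated multiplicity estimate in which each dimension step loses a multiplicative factor (of roughly $2$), as in M$^c$Kernan's original tiger construction; a single one-shot lower bound of the form $\operatorname{mult}_{Z}H\geq k-k^-$ for \emph{every} $H\in S^k_x(lD)$ is not what that argument produces. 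What one can hope to extract from your set-up, following the recursion ``if $e<k-k^-$ then $e\geq\mu(k-e)$'', is a bound like $\operatorname{mult}_ZH\gtrsim\tfrac12(k-k^-)$ per band, iterated as the dimension climbs; the arithmetic then needs the full $2^n$ and the band analysis must be redone accordingly. You should either close the gap in $(\ast)$ with an actual construction of the contradicting divisor in $\lvert\lfloor lD\rfloor\rvert$ (handling the higher-codimension case explicitly), or replace $(\ast)$ by the weaker multiplicity estimate that the iterated argument really gives and rerun the pigeonhole with the correct loss per step.

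The remaining parts are sound: the Riemann--Roch count for $S^{k_0}_x\neq\varnothing$, the monotonicity and band structure of $d(lD,\cdot)$, and the passage from $\mathcal B^k(lD)$ to the family $f\colon Y\to B$ (conditions (1) and (2) of Definition \ref{intrinsic}) via Remark \ref{remark} and Lemma \ref{mult} are all as in the source. The gap is confined to $(\ast)$, but it is the heart of the proof.
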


\subsection{A good refinement to cut down lc centers}
\begin{definition}
  Let $X$ be a smooth projective variety of dimension $n$ and let $D$ be a $\mathbb{Q}$-divisor. Let $f: Y \rightarrow B$ be a birational family of tigers of weight $w$ relative to $D$ that has the separation property. We say that $f$ admits a good refinement $f^{\prime}$ of weight $u$ and dimension $d$ if there is a $\pi$-exceptional divisor $E$, and a commutative diagram
\begin{center}
        \begin{tikzcd}
            Y^\prime \arrow{r}{\tau} \arrow{d}{f^\prime} &Y \arrow{d}{f}\\
            C \arrow{r}{g} & B
        \end{tikzcd}
\end{center}
and a nonempty open subset $U^\prime\subseteq Y^\prime$ such that:

(1) $\tau\vert_{U^\prime}$ is an isomorphism.

(2) For general $b \in B, f_b^{\prime}=f^\prime \vert_{Y_{b^\prime}}: Y_b^{\prime} \rightarrow C_b$ is a birational family of tigers of weight $u$ relative to $\left(\pi^\ast D+E\right)\vert_{Y_b}$ that has the separation property on $U^\prime\cap Y_b^\prime$.

(3) The general fiber of $f^{\prime}$ has dimension $d$.
\end{definition}
The following lemma plays a key role in dimension induction.
\begin{lemma}(\cite[Lemma 3.6]{lacini2023boundedness})\label{refinement}
    Let $X$ be a smooth projective variety of dimension $n$ and let $D$ be a big $\mathbb{Q}$-divisor whose graded ring of sections is finitely generated. Let $f: Y \rightarrow B$ be a birational family of tigers of weight $w$ relative to $D$ having the separation property. Suppose furthermore that $f$ admits a good refinement $f^{\prime}$ of weight $u$. Then $f^{\prime}$ is a birational family of tigers of weight $\frac{1}{\frac{1}{u}+\frac{1}{w}}$ relative to $D$ that has the separation property.
\end{lemma}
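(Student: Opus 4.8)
The plan is to check, for $f'$ and the candidate weight $w':=\frac{1}{1/u+1/w}=\frac{uw}{u+w}$ (so that $\frac{1}{w'}=\frac{1}{w}+\frac{1}{u}$), all of the conditions of Definition~\ref{mck tiger} together with the separation property, following the argument of Lascini in \cite[Lemma 3.6]{lacini2023boundedness}. Put $\pi':=\pi\circ\tau\colon Y'\to X$. Since $\tau|_{U'}$ is an isomorphism onto its image, $\tau$ is birational and hence so is $\pi'$, which gives condition (4); conditions (1) and (2) follow from the morphism $C\to\operatorname{Hilb}(X)$ sending $c$ to the subscheme $\pi'(f'^{-1}(c))$, together with the observation that the restriction of $f'$ over a general fibre of $g$ is the normalisation of the corresponding universal family (which is part of the good-refinement data); and condition (6) is just condition (3) of the good refinement. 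Thus the real content lies in conditions (3) and (5)---producing, for general $c\in C$, an effective $D'_c\sim_{\mathbb{Q}}\frac{1}{w'}D$ for which $\pi'(f'^{-1}(c))$ is a pure lc centre of $(X,D'_c)$---and in the separation property.

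For the construction, I would fix a general $c\in C$ and set $b=g(c)$. As $f$ is a tiger family of weight $w$, there is an effective $D_b\sim_{\mathbb{Q}}\frac{1}{w}D$ for which $V_b:=\pi(f^{-1}(b))$ is a pure lc centre of $(X,D_b)$; after tiebreaking (cf.\ \cite[Lemma 2.3]{chen2017reduction}) I may assume $V_b$ is an exceptional lc centre, so that along its unique lc place there is an adjunction writing the restriction of $K_X+D_b$ to $V_b$ as $K_{V_b}+\Delta_{V_b}$ for an effective $\Delta_{V_b}$ (here $V_b$ is normal, being identified for general $b$ with $f^{-1}(b)$, the normalisation of the Hilbert-scheme fibre). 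On the refinement side, $f'_b\colon Y'_b\to C_b$ is a tiger family of weight $u$ relative to $(\pi^*D+E)|_{Y_b}$, so there is an effective $\Gamma_c\sim_{\mathbb{Q}}\frac{1}{u}(\pi^*D+E)|_{Y_b}$ on $Y_b$ making $\tau(f'^{-1}(c))$ a pure lc centre of $(Y_b,\Gamma_c)$. The key step is to transport $\Gamma_c$ to $X$: because $E$ is $\pi$-exceptional and effective, $\pi_*$ induces an isomorphism $H^0(Y,m(\pi^*D+E))\cong H^0(X,mD)$ for all sufficiently divisible $m$; moreover, since $V_b$ is (after a further small perturbation) the relevant non-klt centre, the restriction $H^0(Y,m(\pi^*D+E))\to H^0(Y_b,m(\pi^*D+E)|_{Y_b})$ is surjective for $m\gg0$ by Nadel vanishing applied to $\mathcal{I}(X,D_b)$ together with Kodaira's lemma, and the finite generation of $R(X,D)$ is used to run this extension inside a fixed linear system $|mD|$. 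This yields an effective $\Theta_c\sim_{\mathbb{Q}}\frac{1}{u}D$ on $X$ whose restriction to $V_b$ matches $\Gamma_c$ modulo the adjunction term $\Delta_{V_b}$. Setting $D'_c:=D_b+\Theta_c\sim_{\mathbb{Q}}\frac{1}{w'}D$ and applying inversion of adjunction along $V_b$, one gets that $(X,D'_c)$ is lc at the generic point of $\pi'(f'^{-1}(c))=\pi(\tau(f'^{-1}(c)))$ with this subvariety as its unique lc centre there, after one more tiebreaking that absorbs $\Delta_{V_b}$ and pushes the lc centre down from $V_b$ to $\pi'(f'^{-1}(c))$. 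This establishes (3) and (5).

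For the separation property, I would fix general $y'_1,y'_2$ in a suitable nonempty open $U''\subseteq Y'$ and distinguish whether $\tau(y'_1),\tau(y'_2)$ lie in the same fibre of $f$. If they both lie in some $Y_b$, apply the separation property of $f'_b$ on $U'\cap Y'_b$ to get a divisor on $Y_b$ that is $\sim_{\mathbb{Q}}\mu(\pi^*D+E)|_{Y_b}$ with $\mu<\frac{3}{u}$ and separates the two points there; transporting it to $X$ as above and adding $D_b$ produces $D'_{y'_1,y'_2}\sim_{\mathbb{Q}}\lambda D$ with $\lambda<\frac{1}{w}+\frac{3}{u}<\frac{3}{w'}$, which is lc but not klt at $\pi'(y'_1)$ with unique lc centre contained in $\pi'(f'^{-1}(f'(y'_1)))$ and not klt at $\pi'(y'_2)$. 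If they lie in different fibres, use the separation property of $f$ to separate $\pi(\tau(y'_1))$ from $\pi(\tau(y'_2))$ by an effective $D_0\sim_{\mathbb{Q}}\lambda_0D$ with $\lambda_0<\frac{3}{w}$ whose unique lc centre near $\pi(\tau(y'_1))$ is $V_{b_1}$ (after tiebreaking), and then add the transported refinement divisor $\Theta_{c_1}\sim_{\mathbb{Q}}\frac{1}{u}D$ coming from the tiger family $f'_{b_1}$; adding an effective divisor preserves non-kltness at $\pi'(y'_2)$, while inversion of adjunction plus tiebreaking cuts the unique lc centre down to $\pi'(f'^{-1}(f'(y'_1)))$, and $\lambda_0+\frac{1}{u}<\frac{3}{w}+\frac{1}{u}<\frac{3}{w'}$.

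The hard part will be the transportation step together with the two tiebreakings: one must guarantee that the combined divisor $D_b+\Theta_c$ is genuinely lc---not merely non-klt---at the generic point of the smaller centre, that its unique lc centre there is exactly $\pi'(f'^{-1}(c))$ and not the larger $V_b$, and that all the $\mathbb{Q}$-linear equivalence classes remain under control. This forces the transported divisor to be chosen from sections that genuinely extend from $V_b$ to $X$, which is precisely where finite generation of $R(X,D)$ is needed; a secondary but necessary point is to arrange all the genericity and openness conditions---on $c\in C$, on $b\in B$, and on the points $y'_i$---so that they hold simultaneously over a single nonempty open set.
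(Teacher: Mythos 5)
The paper does not reprove this lemma; it is imported verbatim from Lascini's work (\cite[Lemma~3.6]{lacini2023boundedness}), and the only commentary the paper offers is the remark immediately following, which explains how the finite-generation hypothesis is actually consumed: via \cite[Example~2.1.31]{lazarsfeld1} one passes to a birational model $\mu\colon X'\to X$ on which $\mu^*(pD)$ has a decomposition $D'+N$ with $D'$ semiample, then to the Iitaka model $X''$ where the relevant divisor becomes an ample $A$ with $R(X'',A)=R(X,D)^{(p)}$, so that the cutting-down of lc centers can be done as if $D$ were ample, using \cite[Lemma~2.16]{lacini2023boundedness}. There is therefore no ``paper's proof'' against which to check you line-by-line.

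Your reconstruction gets the overall architecture right: combine the weight-$w$ divisor $D_b$ (exhibiting $V_b$ as a pure lc center) with a transported weight-$u$ divisor from the refinement on the fiber, do the harmonic-mean bookkeeping $\frac{1}{w'}=\frac{1}{w}+\frac{1}{u}$, use that $E$ is $\pi$-exceptional to identify sections, cut the lc center down via inversion of adjunction and tiebreaking, and split the separation argument into a same-fiber case (use the refinement's separation property on $Y'_b$, bound $\lambda<\frac{1}{w}+\frac{3}{u}<\frac{3}{w'}$) and a different-fiber case (use $f$'s separation property plus one refinement divisor, bound $\lambda<\frac{3}{w}+\frac{1}{u}<\frac{3}{w'}$). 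These are exactly the right moves and the inequalities check out.

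The place where you substitute your own mechanism for Lascini's is the transportation/extension step. You invoke Nadel vanishing ``applied to $\mathcal{I}(X,D_b)$ together with Kodaira's lemma'' to get surjectivity of $H^0(Y,m(\pi^*D+E))\to H^0(Y_b,m(\pi^*D+E)|_{Y_b})$. As stated this is a genuine gap: Nadel vanishing only gives $H^1(X,\mathcal{O}_X(L)\otimes\mathcal{I}(X,D_b))=0$, and to conclude surjective restriction to $V_b$ you would additionally need $\mathcal{I}(X,D_b)=\mathcal{I}_{V_b}$, which is not automatic even when $V_b$ is an exceptional lc center (the multiplier ideal is the ideal of a scheme structure that is in general thicker than the reduced $V_b$, and the other non-klt components may not have been removed). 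The intended argument does not go through Nadel vanishing at all: the finite-generation hypothesis is there so that one can replace $D$ by an ample divisor $A$ on a birational model, where the needed extension and cutting-down of centers on an open subset is handled by \cite[Lemma~2.16]{lacini2023boundedness}. Your phrase ``finite generation of $R(X,D)$ is used to run this extension inside a fixed linear system $|mD|$'' is in the right direction but does not capture the actual reduction. To your credit, you explicitly flag the transportation-plus-tiebreaking step as ``the hard part,'' and indeed that is precisely where the work of Lascini's Lemma 2.16 is hiding.
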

\begin{remark}
     By \cite[Example 2.1.31]{lazarsfeld1}, the condition that the graded ring of sections is finitely generated implies that there is a birational morphism $\mu:X^\prime\rightarrow X$, an effective divisor $N$ on $X^\prime$ and a positive integer $p$ such that $D^\prime=\mu^\ast(pD)-N$ is base point free and $R(X^\prime,D^\prime)=R(X,D)^{(p)}$. Take the Iitaka fibration $\phi:X^\prime\rightarrow X^{\prime\prime}$ relative to $D$ and let $A$ be an ample divisor on $X^\prime $ such that $D^\prime=\phi^\ast A$. Since $R(X^{\prime\prime},A)=R(X,D)^{(p)}$, we can cut down lc centers on an open subset of $X$ assuming that $D$ is ample. See \cite[Lemma 2.16]{lacini2023boundedness} for details.

    We can use Lemma \ref{refinement} for $D=K_X$ on a smooth projective variety of general type thanks to \cite{birkar2010existence}.
\end{remark}

\section{Proof of Theorem \ref{main theorem}}
\begin{lemma}\label{good algebraic family}
    Let $X$ be a smooth projective $n$-fold of general type and $f:Y\rightarrow B$ a birational family of tigers of dimension $d\geq 1$ relative to $K_X$ with the separation property. Let $\pi:Y\rightarrow X$ be the associated birational morphism.

    Let $\alpha$ be a positive rational number. If the volume of the general fiber is larger than $\alpha^d$, then $f$ admits a good refinement $f^{\prime}$ of weight $\frac{\alpha}{2^d\cdot d}$.
\end{lemma}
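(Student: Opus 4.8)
The plan is to apply the machinery of intrinsic tigers fiberwise over $B$. For a general $b \in B$, the fiber $Y_b$ maps birationally to $\pi(Y_b) = B^k_{\pi(y)}(lK_X)$ for $y \in U \cap Y_b$, which is a pure lc center of dimension $d$. The idea is to restrict everything to $Y_b$ and build a good refinement by running the intrinsic-tiger construction on the $d$-dimensional variety $Y_b$ with respect to the restricted divisor $(\pi^\ast K_X + E)|_{Y_b}$, where $E$ is a suitable $\pi$-exceptional divisor chosen (via the tiebreaking/adjunction setup in the separation-property definition) so that $(\pi^\ast K_X + E)|_{Y_b}$ is $\mathbb{Q}$-linearly equivalent to $K_{Y_b}$ up to a controlled boundary, or at least has volume comparable to $\operatorname{vol}(Y_b) = \operatorname{vol}(F)$. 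Since $\operatorname{vol}(F) > \alpha^d$, we get $\operatorname{vol}\big((\pi^\ast K_X + E)|_{Y_b}\big) > \alpha^d = (2^d \cdot \tfrac{\alpha}{2^d})^d$, which is exactly the hypothesis $\operatorname{vol}(D) > (2^d a)^d$ of Lemma \ref{construct intrinsic} with $n$ replaced by $d$ and $a = \tfrac{\alpha}{2^d}$. That lemma then produces a birational family of intrinsic tigers of weight $\tfrac{a}{d} = \tfrac{\alpha}{2^d \cdot d}$ relative to $(\pi^\ast K_X + E)|_{Y_b}$ on $Y_b$.

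Next I would spread this fiberwise construction out over $B$ to obtain the commutative diagram
\begin{center}
\begin{tikzcd}
Y^\prime \arrow{r}{\tau} \arrow{d}{f^\prime} & Y \arrow{d}{f}\\
C \arrow{r}{g} & B
\end{tikzcd}
\end{center}
This is the standard Hilbert-scheme / relative-construction argument: the data defining the intrinsic tiger on $Y_b$ (the integers $k', l'$, the base loci $B^{k'}_{\pi'(y')}$ of the relevant linear subsystems) vary algebraically in $b$, so after passing to a generically finite cover $C \to B$ and a suitable birational model $Y'$ of the universal family one gets $f^\prime: Y' \to C$ with general fiber of dimension $d$, with $\tau|_{U'}$ an isomorphism for an appropriate open $U' \subseteq Y'$, and with $f^\prime_b: Y'_b \to C_b$ being precisely the intrinsic-tiger family on (a birational model of) $Y_b$. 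Invoking the lemma that an intrinsic tiger of weight $w$ with $wl > 3$ has the separation property — and using the remark that $\alpha$ (hence the weight) will be large enough in practice, though to be safe one should simply carry the hypothesis $\tfrac{\alpha}{2^d \cdot d} \cdot l' > 3$ or note it follows once $\alpha$ is taken large, as it will be when this lemma is applied — we conclude $f^\prime_b$ has the separation property on $U' \cap Y'_b$. This verifies conditions (1)–(3) in the definition of a good refinement, with weight $u = \tfrac{\alpha}{2^d \cdot d}$.

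The main obstacle I anticipate is bookkeeping around the divisor $E$ and the comparison of volumes. One must check that the $\pi$-exceptional divisor $E$ appearing in the good-refinement diagram can be chosen so that $(\pi^\ast K_X + E)|_{Y_b}$ genuinely has volume $\geq \operatorname{vol}(F)$ (so that Lemma \ref{construct intrinsic} applies with the stated constant) while simultaneously being compatible with the separation data already present on $f$ — concretely, $E$ should come from the log-resolution / adjunction structure underlying the lc center, so that restricting the ambient separating divisors $D_{y_1,y_2}$ to $Y_b$ produces boundaries on $Y_b$ against which the intrinsic tiger's lc centers can be cut. A second, more technical point is that $\operatorname{vol}(F)$ refers to the canonical volume of the smooth model of $F$, whereas $Y_b$ may be singular and $(\pi^\ast K_X + E)|_{Y_b}$ is only $\mathbb{Q}$-linearly equivalent to $K_{Y_b}$ modulo correction terms; reconciling these requires a birational modification of $Y_b$ and a verification that the correction divisor is effective, so that volumes only increase. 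Once these identifications are in place, the construction is a direct fiberwise application of Lemma \ref{construct intrinsic} followed by the relative-spreading argument, and the weight $\tfrac{\alpha}{2^d \cdot d}$ drops out of the arithmetic $a = \tfrac{\alpha}{2^d}$, $u = \tfrac{a}{d}$.
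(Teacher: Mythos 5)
Your proposal follows the same route as the paper's proof: apply Lemma \ref{construct intrinsic} fiberwise to the general fiber $Y_b$ with $a = \alpha/2^d$ (so $\operatorname{vol}(K_{Y_b}) > \alpha^d = (2^d a)^d$), obtain a birational family of intrinsic tigers of weight $a/d = \alpha/(2^d \cdot d)$ on $Y_b$, and then spread the construction out over $B$. The dimension and weight arithmetic are correct, and you identify the right universal-family idea.

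Two corrections, one substantive. First, $g: C \to B$ is not a generically finite cover: since the refinement $f'$ cuts the $d$-dimensional fibers of $f$ down to lower-dimensional lc centers, $C_b$ is a positive-dimensional base of a tiger family on $Y_b$, so $\dim C > \dim B$. Second, and this is the crux that your sketch glosses over with ``the data (the integers $k', l', \ldots$) vary algebraically in $b$'': positive integers do not vary algebraically, and you must exhibit a \emph{single} choice of $l$ and $k$ that works uniformly over a dense open $V \subseteq B$. The paper does this by combining generic flatness, the semicontinuity theorem, and Grauert's theorem: the hypothesis $\operatorname{vol}(Y_b) > \alpha^d$ forces the countable union of the closed loci $W_r = \{\, b \in V : h^0(Y_b, rK_{Y_b}) \geq (\alpha r)^d / d!\,\}$ to be all of $V$, hence $W_l = V$ for some $l$; Grauert then gives surjectivity of $H^0(U, lK_U) \to H^0(Y_b, lK_{Y_b})$, which is what lets the relative construction $\mathcal{C}^k(lK_U)$ built from $\mathcal{F} = \{(x,y) \in U \times U : f(x) = f(y)\}$ specialize to $B^k_x(lK_{Y_b})$ on fibers. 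Without this step there is no uniform $k, l$ and the ``spreading out'' has nothing to spread. Your remaining concerns are valid but resolve immediately: for a general fiber the normal bundle is trivial, so $K_{Y_b} = K_Y|_{Y_b} = (\pi^\ast K_X + E_0)|_{Y_b}$ by adjunction with $E_0 \geq 0$ the $\pi$-exceptional divisor coming from $X$ being smooth (hence terminal), which is exactly the divisor $E$ required by the good-refinement definition; and the separation threshold $wl > 3$ holds automatically in every application since $\alpha$ is taken large, as the paper notes in a remark.
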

\begin{proof}
    It is essentially a part of the proof of \cite[Lemma 3.11]{lacini2023boundedness}. We rephrase just for the convenience of readers.

    Firstly, we show that we can obtain a birational family of intrinsic tigers relative to $K_{Y_b}$ with the common weight $w$ and common positive integers $k,l$ on $Y_b$ for general $b\in B$. By generic flatness, there is an open subset $V$ of $B$ such that $f$ is flat over $V$. By the semicontinuity theorem \cite[Chapter III, Theorem 12.8]{hartshorne1977algebraic}, given a positive integer $r$ there is a closed subset $W_r$ of $V$ consisting of $b\in V$ such that $h^0(Y_b,rK_{Y_b})\geq \frac{(\alpha r)^d}{d!}$.
    Since the volume of the general fiber is larger than $\alpha$, the union of all $W_r$, for $r\in\mathbb{Z}_{>0}\cap \frac{2^{d+1}}{\alpha} \mathbb{Z}$, is equal to $V$, which implies there is an $l\in \mathbb{Z}_{>0}\cap \frac{2^{d+1}}{\alpha} \mathbb{Z}$ such that $W_l=V$.
    We can take $l$ to be sufficiently large so that $$h^0(Y_b,lK_{Y_b})\geq \frac{(\alpha l)^d}{d!}>\binom{d+\frac{\alpha l}{2}-1}{d}.$$
    After possibly shrinking $V$, we can assume that $h^0(Y_b,lK_{Y_b})$ is a constant for $b\in V$. By Grauert's theorem (\cite[Corollary 12.9]{hartshorne1977algebraic}), letting $U=f^{-1}(V)$ we have the surjection
    $$H^0(U,lK_U)\longrightarrow H^0(Y_b,lK_{Y_b})$$
    for $b\in V$. After possibly shrinking $V$ we can assume that for $1\leq i\leq 2^d\cdot \frac{\alpha l}{2^{d+1}}$, $d(lK_{Y_b},i)$ is a constant on $V$. The proof of Lemma \ref{construct intrinsic} implies that we can choose common $w=\frac{\alpha}{2^d\cdot d}$, $l$ and $k$ for $b\in V$.

    Now we show that the base loci $B_x^k\left(l K_{Y_b}\right)$ form a nice algebraic family as in the definition of good refinement. Consider $\mathcal{F}=\{(x, y) \in U \times U$ such that $f(x)=f(y)\}$. Then consider the sections of $\operatorname{pr}_1^*(lK_U)$ on $U \times U$ that are tangent to order at least $k$ to $\mathcal{F}$ along $\Delta \subseteq U \times U$. Let $\mathcal{C}^k(l K_U)$ be their base locus. By Remark \ref{remark} and Lemma \ref{mult}, we have that $B_x^k\left(l K_{Y_b}\right)=\operatorname{pr}_1\left(\mathcal{C}^k(lK_U) \cap (U \times\{x\})\right)$ for general $x \in U$. This gives a good refinement of weight $\frac{\alpha}{2^d\cdot d}$.
\end{proof}

\begin{proof}[Proof of Theorem \ref{main theorem}]
    For each $1\leq d \leq n-1$, we define $M_d$ in the following way. First, take $M_1$ to be a positive integer strictly larger than $3\cdot 2^{n+1}\cdot n^2$. Inductively, if $M_{d-1}$ is defined, take a positive integer 
    $$M_d>\max\{ M_{d-1},\frac{2^{n+2}\cdot n^2}{\lambda(d-1,M_{d-1}^{d-1})}\}.$$
    Besides, let $K$ satisfy $$\frac{2^n\cdot n}{K^\frac{1}{n}}<\frac{1}{6}\max_{1\leq k\leq n-1} \{1,\lambda(k,M_k^k) \}.$$

    By Lemma \ref{construct intrinsic}, we get a birational family of tigers with the separation property of dimension $n_1\leq n-1$ and weight $\frac{1}{t_1}$ relative to $K_X$, where $0<t_1< \frac{2^n\cdot n}{\operatorname{vol}(X)^\frac{1}{n}}$. 

    The induction goes as follows. Assume that there is a birational family of tigers $f:Y\rightarrow B$ with the separation property of dimension $n_d\leq n-d$ and weight $\frac{1}{t_d}$ relative to $K_X$. Then for a general fiber $Y_b$,

    $(1)_d$ either $\operatorname{vol}(Y_b)\geq M_{n_d}^{n_d}>(\frac{ M_{n_d}}{2})^{n_d}$, then by Lemma \ref{good algebraic family} and Lemma \ref{refinement}, we get a birational family of tigers with the separation property of dimension $n_{d+1}\leq n-d-1$ and weight $\frac{1}{t_{d+1}}$ relative to $K_X$, where 
    $$t_{d+1}\leq t_d+\frac{2^{n_d+1}\cdot n_d}{M_{n_d}}.$$

    $(2)_d$ or $\operatorname{vol}(Y_b)< M_{n_d}^{n_d}$ and $f:Y\rightarrow B$ satisfies the condition $(B)_{\mathfrak{X},t_d}$ for $\mathfrak{X}=\mathfrak{X}_{n_d,M_{n_d}^{n_d}}$.

    The induction stops when arriving at $(2)_d$ for some $d$, or we run the induction to the end and get a biational family of tigers with the separation property of dimension zero and weight $\frac{1}{t_m}$ relative to $K_X$.

    In the first case, we have 
    \begin{equation*}
    \begin{aligned}
        t_{d} &\leq t_1+\sum_{i=1}^{d-1} \frac{2^{n_{i}+1}\cdot n_{i}}{M_{n_{i}}}\\
        &< \frac{2^n\cdot n}{\operatorname{vol}(X)^\frac{1}{n}}+(d-1)\cdot \frac{2^{n+1}\cdot n}{M_{n_{d}+1}}\\
        &\leq \frac{2^n\cdot n}{K^\frac{1}{n}}+\frac{2^n\cdot n^2}{M_{n_{d}+1}}\\
        &\leq \lambda(n_d,M_{n_d}^{n_d}),
    \end{aligned}
    \end{equation*}
    where the last inequality comes from the choice of $K$ and $M_i$ at the beginning of the proof.
    So after birational modifications, $X$ admits a fibration which satisfies $(B)_{\mathfrak{X},c}$ for $\mathfrak{X}=\mathfrak{X}_{k,M_{k}^{k}}$ and $c=\lambda(k,M_{k}^{k})$ for some $1\leq k\leq n-1$.

    In the second case, we have 
    \begin{equation*}
        \begin{aligned}
           t_m & \leq t_1+\sum_{i=1}^{m-1}\frac{2^{n_{i}+1}\cdot n_{i}}{M_{n_{i}}}\\
           & < \frac{2^n\cdot n}{\operatorname{vol}(X)^\frac{1}{n}}+(m-1)\cdot \frac{2^{n+1}\cdot n}{M_1}\\
           &\leq \frac{2^n\cdot n}{K^\frac{1}{n}}+\cdot \frac{2^n\cdot n^2}{M_1}\\
           &\leq \frac{1}{3},
        \end{aligned}
    \end{equation*}
    where the last inequality comes from the choice of $K$ and $M_i$ at the beginning of the proof. 
    By Lemma \ref{birational}, we know the pluricanonical map $\varphi_{a,{X}}$ is birational for all $a\geq 2$.
\end{proof}

\section{Proof of Theorem \ref{main theorem2}}
We prove Theorem \ref{main theorem2} by showing that a more detailed theorem holds, which is a generalization of \cite[Theorem 4.12]{lacini2023boundedness}.
\begin{theorem}
   Let $n\geq1$ and $r\geq 2$ be positive integers.
    \begin{enumerate}
        \item There exists a $K_0>0$ such that, if $X$ is a smooth projective $n$-fold of general type with $h^0(X,rK_X)>0$, then the general fiber of $\varphi_{r,X}$ has volume less than $K_0$ (the degree of $\varphi_{r,X}$ less than  $K_0$ if $\varphi_{r,X}$ is generically finite). In particular, $r$-canonical maps have birationally bounded fibers.
        \item There exists a $K_1>0$ such that, if $X$ is a smooth projective $n$-fold of general type with $h^0(X,rK_X)=0$, then $X$ is birationally fibered in $k$-folds $Y_b$ with volume less than $K_1$ and $h^0(Y_b,rY_b)=0$ for some $1\leq k\leq n$.
    \end{enumerate} 
\end{theorem}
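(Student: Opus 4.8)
The plan is to prove statements (1) and (2) together by induction on $n$, with $r\ge 2$ fixed throughout. The base case $n=1$ is immediate: for a smooth projective curve $X$ of general type, $h^0(X,rK_X)=(2r-1)(g(X)-1)>0$, so (2) is vacuous, and $\varphi_{r,X}$ embeds $X$ when $r\ge 3$ and is birational when $r=2$ except if $g(X)=2$, where it has degree $2$; thus $K_0=3$ works in dimension one.

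For the inductive step I first choose the function $\lambda$ to feed into Theorem \ref{main theorem}: for positive integers $(k,M)$ let $t_{k,M}>0$ be the constant produced by Theorem \ref{extension} for $d=k$ and the birationally bounded family $\mathfrak X_{k,M}$, and put $\lambda(k,M):=\tfrac12 t_{k,M}$. Theorem \ref{main theorem} then yields integers $M_1<\dots<M_{n-1}$ and a threshold $K>0$ with $\lambda(k,M_k^k)<t_{k,M_k^k}$ for all $1\le k\le n-1$. Given $X$ of dimension $n$: if $\operatorname{vol}(X)<K$ then $X$ lies in a birationally bounded family (Theorem \ref{HaconMckernanbdd}), so the fibers of $\varphi_{r,X}$ are birationally bounded of bounded volume (settling (1)) and $X$ itself serves for (2) with $k=n$. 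So assume $\operatorname{vol}(X)\ge K$. By Theorem \ref{main theorem}, after birational modifications either $\varphi_{a,X}$ is birational for all $a\ge 2$ — in which case (1) is trivial and the hypothesis $h^0(X,rK_X)=0$ of (2) is impossible — or $X$ carries a fibration $f\colon X\to Z$ satisfying $(B)_{\mathfrak X_{k,M_k^k},\lambda(k,M_k^k)}$ for some $1\le k\le n-1$, with general fiber $Y_b$ satisfying $\operatorname{vol}(Y_b)\le M_k^k$. Since $\lambda(k,M_k^k)<t_{k,M_k^k}$, Theorem \ref{extension} applies: for $p\ge 2$ the restriction $H^0(X,pK_X)\to H^0(X_1,pK_{X_1})\oplus H^0(X_2,pK_{X_2})$ is surjective for general fibers; writing $V_b:=H^0(X,rK_X\otimes\mathcal I_{Y_b})$, this amounts to $H^0(X,rK_X)\twoheadrightarrow H^0(Y_b,rK_{Y_b})$ for general $b$ and to $V_{b_1}+V_{b_2}=H^0(X,rK_X)$ for general $b_1\ne b_2$.

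Statement (2) is then immediate: $h^0(X,rK_X)=0$ forces $h^0(Y_b,rK_{Y_b})=0$ by surjectivity, so $X$ is birationally fibered in $k$-folds $Y_b$ of volume $\le M_k^k$ with vanishing $r$-th plurigenus, $k<n$. For (1), $h^0(X,rK_X)>0$ forces $h^0(Y_b,rK_{Y_b})>0$ for general $b$ (else every section of $rK_X$ vanishes on a dense set of fibers). Passing to a birational model on which $f$, $\varphi_{r,X}$ and the relative $r$-canonical map of $f$ are morphisms, surjectivity shows $\varphi_{r,X}|_{Y_b}=\varphi_{r,Y_b}$ for general $b$, with image $W_b\subseteq W:=\overline{\varphi_{r,X}(X)}$ spanning the linear subspace $L_b=\mathbb P(V_b^{\perp})$. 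Lemma \ref{fiber} applied to $\varphi_{r,Y_b}$ (the case $h^0(Y_b,rK_{Y_b})=1$ being trivial) bounds the volume of its general fiber by $(rk)^k\operatorname{vol}(Y_b)\le (rk)^kM_k^k$. Moreover $V_{b_1}+V_{b_2}=H^0(X,rK_X)$ gives $L_{b_1}\cap L_{b_2}=\varnothing$, hence $W_{b_1}\cap W_{b_2}=\varnothing$ for general $b_1\ne b_2$. I would use this pairwise disjointness of the $W_b$ to show that a general fiber $F$ of $\varphi_{r,X}$ is contracted by $f$, so that $F$ is a union of components of general fibers of the maps $\varphi_{r,Y_b}$ and therefore has volume $<(rk)^kM_k^k$; taking $K_0$ to be the maximum of $K$, of $(r(n-1))^{n-1}M_{n-1}^{n-1}$, of the inductive constants in dimensions $<n$, and of the degree bound from the case $\operatorname{vol}(X)<K$, one obtains the stated bound, and since $F$ is of general type (easy addition for the fibration $\varphi_{r,X}$), the volume bound upgrades to birational boundedness via Theorem \ref{HaconMckernanbdd}.

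The main obstacle is the last implication, i.e.\ showing $\dim f(F)=0$. Disjointness of the $W_b$ shows that the incidence correspondence $I=\{(b,w):w\in W_b\}\subseteq Z\times W$ is generically injective over $W$ outside the locus where the extension theorem degenerates; a naive count only yields $\dim f(F)\le \dim Z-1$, not $0$. I expect one must either apply the extension theorem in families over $Z$ to make the degeneration locus negligible in the incidence count, or run an auxiliary induction on $\dim f(F)$: in the residual case where $F$ dominates a positive-dimensional $Z_F\subseteq Z$, $F$ is itself fibered over $Z_F$ by general fibers of the $\varphi_{r,Y_b}$, and one bounds $\operatorname{vol}(F)$ by feeding $F$ (of dimension $<n$) or $Z_F$ back into the inductive hypothesis together with easy addition. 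Making this bookkeeping uniform in the constants is where the real work lies; the remaining ingredients — the choice of $\lambda$, Theorem \ref{extension}, and Lemma \ref{fiber} — enter exactly as above.
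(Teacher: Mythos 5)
Your proposal correctly identifies the three ingredients the paper uses — Theorem \ref{main theorem} with $\lambda$ chosen so that condition $(B)_{\mathfrak X_{k,M_k^k},\lambda(k,M_k^k)}$ triggers the extension theorem (Theorem \ref{extension}), the resulting surjectivity of $H^0(X,rK_X)\to H^0(Y_{b_1},rK_{Y_{b_1}})\oplus H^0(Y_{b_2},rK_{Y_{b_2}})$, and Lemma \ref{fiber} to bound fiber volumes. Statement (2) is handled essentially as in the paper. But for statement (1) there is a genuine gap that you yourself flag: the entire argument hinges on concluding that the general fiber $F$ of $\varphi_{r,X}$ has volume $<K_0$, and you concede you can only show this if $F$ is contracted by $f$, which you leave unproved (``a naive count only yields $\dim f(F)\le\dim Z-1$''; ``making this bookkeeping uniform in the constants is where the real work lies''). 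A proposal that declares the crux unfinished is not a proof, and the auxiliary induction on $\dim f(F)$ you sketch — repackaging $F$ or $Z_F$ into the outer inductive hypothesis via easy addition — is not carried out and is not needed.

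The paper's argument avoids this entirely and is not inductive in $n$. It argues by contradiction: suppose $\operatorname{vol}(F)>K_0$. Then $\operatorname{vol}(X)>K$ by Lemma \ref{fiber}, so Theorem \ref{main theorem} either gives birationality of $\varphi_{r,X}$ (impossible since then $F$ is a point) or a fibration $f$ to which the extension theorem applies. Surjectivity of restriction to a single general fiber means $\varphi_{r,X}|_{\pi(Y_b)}$ coincides with $\varphi_{r,Y_b}$, and Lemma \ref{fiber} bounds the volume of fibers $F'$ of $\varphi_{r,Y_b}$ by $(rn)^nM_k^k<K_0$. Since $\operatorname{vol}(F)>K_0>\operatorname{vol}(F')$, $F$ \emph{cannot} lie in any single $\pi(Y_b)$; hence a general $F$ meets $\pi(Y_{b_1})$ and $\pi(Y_{b_2})$ for two distinct general $b_1,b_2$. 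But the two-fiber surjectivity forces $\varphi_{r,X}(\pi(Y_{b_1}))$ and $\varphi_{r,X}(\pi(Y_{b_2}))$ into disjoint linear subspaces, so $|rK_X|$ separates points of those two fibers, contradicting that they lie in a common fiber of $\varphi_{r,X}$. In other words, you should not try to \emph{establish} that $F$ is contracted and then bound its volume — you should assume the volume bound fails and read off that $F$ would then have to straddle two general fibers, which is immediately incompatible with the separation you already derived from the extension theorem. This short-circuits exactly the incidence-correspondence difficulty you were worried about.

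Two smaller remarks: your choice $\lambda=\tfrac12 t$ instead of $\lambda=t$ is harmless, since condition $(B)$ already gives a strict inequality $\varepsilon<\lambda$; and the $n=1$ base case and outer induction you set up add nothing — the paper's proof works directly in dimension $n$ with $K_0=(rn)^n\max_{1\le k\le n-1}\{K,M_k^k\}+1$.
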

\begin{proof}
    (1) For a birationally bounded set $\mathcal{P}$ of smooth projective varieties of dimension $d<n$, we get a positive number $t(\mathcal{P})$ by Theorem \ref{extension}. Choose a function $\lambda:\mathbb{Z}_{>0} \times \mathbb{Z}_{>0} \rightarrow \mathbb{R}_{>0}$ satisfying $\lambda(a,b)=t(\mathfrak{X}_{a,b})$, then we get $M_{n-1}>\cdots>M_1>0$ and $K>0$ by Theorem \ref{main theorem}.

    Let $$K_0=(rn)^n\max_{1\leq k\leq n-1}\{K,M_k^k\}+1.$$
    Suppose that there exists some $X$ such that $\operatorname{vol}(F)>K_0$ where $F$ is the general fiber of $\varphi_{r,X}$ (or $\operatorname{deg}\varphi_{r,X}>K_0$ if $\varphi_{r,X}$ is generically finite). Then apply Lemma \ref{fiber} to some higher birational model of $X$ if $h^0(X,rK_X)\geq 2$, or we have $F=X$ if $h^0(X,rK_X)=1$. In both cases, $\operatorname{vol}(X)>K$. Hence either $\lvert aK_X\rvert$ induces a birational map for $a\geq 2$ or there is a commutative diagram:
    \begin{center}
        \begin{tikzcd}
            Y \arrow{r}{\pi} \arrow{d}{f} &X\\
            B
        \end{tikzcd}
    \end{center}
    where $\pi$ is birational and $f$ satisfies  $(B)_{\mathfrak{X}_{k, M_k^k}, \lambda\left(k, M_k^k\right)}$ for some $1 \leq k \leq n-1$.

    For the first case, we see that $\lvert rK_X \rvert$ induces a birational map so $F$ is a point, which is a contradiction.

    For the second case, since $\lambda(k,M_k^k)=t(\mathfrak{X}_{k,M_k^k})$ the map $$H^0(Y,rK_Y)\rightarrow H^0(Y_{b_1},rK_{Y_{b_1}})\oplus H^0(Y_{b_2},rK_{Y_{b_2}})$$ is surjective for general $b_1\neq b_2\in B$ by Theorem \ref{extension}. For general $b\in B$, $H^0(Y_b,rK_{Y_b})\neq 0$ since $H^0(Y,rK_Y)=H^0(X,rK_X)\neq 0$. Therefore, $\lvert rK_X \rvert$ can separate $\pi(x_1)$ and $\pi(x_2)$ for general $x_1\in Y_{b_1}$ and $x_2\in Y_{b_2}$ with $b_1\neq b_2$.

    For general $b\in B$, we have $h^0(Y_b,rK_{Y_b})\neq 0$ and $$\operatorname{vol}(Y_b)\leq M_k^k<\frac{K_0}{(rn)^n}.$$ By Lemma \ref{fiber}, we know $\operatorname{vol}(F^\prime)<K_0$, where $F^\prime$ is the general fiber of $\varphi_{r,Y_b}$. Since $\operatorname{vol}(F)>K_0$, we know the general fiber of $\varphi_{r,X}$ is not contained in the image via $\pi$ of any fiber of $f$. Hence, for general points $x,y\in Y$ such that $\varphi_{r,X}(\pi(x))=\varphi_{r,X}(\pi(y))$, they are in different fibers of $f$. Since $\lvert rK_X \rvert$ can separate $\pi(x_1)$ and $\pi(x_2)$ for general $x_1\in Y_{b_1}$ and $x_2\in Y_{b_2}$ with $b_1\neq b_2$, we get a contradiction.

    Therefore, for any smooth projective $n$-fold of general type with $h^0(X,rK_X)>0$, the general fiber of $\varphi_{r,X}$ has volume less than $K_0$. The last assertion follows from Theorem \ref{HaconMckernanbdd}.
    
    (2) Let $M_i,K$ be as above. Let $$K_1=\max_{1\leq k\leq n-1}\{M_k^k,K\}+1.$$ For a smooth projective $n$-fold $X$ of general type with $h^0(X,rK_X)=0$ and $\operatorname{vol}(X)>K$, $\varphi_{r,X}$ cannot be birational so there is a commutative diagram:
    \begin{center}
        \begin{tikzcd}
            Y \arrow{r}{\pi} \arrow{d}{f} &X\\
            B
        \end{tikzcd}
    \end{center}
    where $\pi$ is birational and $f$ satisfies  $(B)_{\mathfrak{X}_{k, M_k^k}, \lambda\left(k, M_k^k\right)}$ for some $1 \leq k \leq n-1$. Similarly to (1) there is a surjection $$H^0(Y,rK_Y)\rightarrow H^0(Y_{b_1},rK_{Y_{b_1}})\oplus H^0(Y_{b_2},rK_{Y_{b_2}})$$ for general $b_1\neq b_2\in B$. Therefore, $H^0(Y_{b},rK_{Y_{b}})=0$ for general $b\in B$ and $$\operatorname{vol}(Y_b)\leq M_k^k< K_1.$$

    For those $X$ with $\operatorname{vol}(X)\leq K$, the conclusion is trivial.
\end{proof}

\addtocontents{toc}{\protect\setcounter{tocdepth}{0}}
\section*{\bf Acknowledgments}
The author appreciates his Ph.D. advisor Prof. Meng Chen for introducing him to the problem. He is also grateful to Zhi Jiang, Chen Jiang, Hexu Liu for useful discussions. This work was supported by NSFC for Innovative Research Groups \#12121001.
\addtocontents{toc}{\protect\setcounter{tocdepth}{1}}
\bibliographystyle{alpha}
\bibliography{reference.bib}

\begin{thebibliography}{BCHM10}

\bibitem[BCHM10]{birkar2010existence}
Caucher Birkar, Paolo Cascini, Christopher Hacon, and James McKernan.
\newblock Existence of minimal models for varieties of log general type.
\newblock {\em Journal of the American Mathematical Society}, 23(2):405--468, 2010.

\bibitem[Bea79]{beauville1979application}
Arnaud Beauville.
\newblock L'application canonique pour les surfaces de type general.
\newblock {\em Inventiones mathematicae}, 55(2):121--140, 1979.

\bibitem[Bom73]{bombieri1973canonical}
Enrico Bombieri.
\newblock Canonical models of surfaces of general type.
\newblock {\em Publications Math{\'e}matiques de l'IH{\'E}S}, 42:171--219, 1973.

\bibitem[CC15]{chen2015explicit}
Jungkai~A Chen and Meng Chen.
\newblock Explicit birational geometry of 3-folds and 4-folds of general type, {III}.
\newblock {\em Compositio Mathematica}, 151(6):1041--1082, 2015.

\bibitem[Che18]{chen2016minimal}
Meng Chen.
\newblock On minimal 3-folds of general type with maximal pluricanonical section index.
\newblock {\em Asian J. Math.}, 22(2):257--268, 2018.

\bibitem[CJ14]{chen2014unboundedness}
Meng Chen and Zhi Jiang.
\newblock Unboundedness of fiber invariants of canonically fibered varieties of general type.
\newblock {\em Bulletin of the London Mathematical Society}, 46(6):1183--1188, 2014.

\bibitem[CJ17]{chen2017reduction}
Meng Chen and Zhi Jiang.
\newblock A reduction of canonical stability index of 4 and 5 dimensional projective varieties with large volume.
\newblock {\em Annales de l'Institut Fourier}, 67(5):2043--2082, 2017.

\bibitem[CL24]{chen2024lifting}
Meng Chen and Hexu Liu.
\newblock A lifting principle for canonical stability indices of varieties of general type.
\newblock {\em Journal f{\"u}r die reine und angewandte Mathematik (Crelles Journal)}, 2024(816):19--45, 2024.

\bibitem[Hac04]{derek2004degree}
Christopher Hacon.
\newblock On the degree of the canonical maps of 3-folds.
\newblock {\em Proceedings of the Japan Academy. Series A Mathematical sciences}, 80(8):166--167, 2004.

\bibitem[Har77]{hartshorne1977algebraic}
Robin Hartshorne.
\newblock {\em Algebraic geometry}, volume~52 of {\em Graduate Texts in Mathematics}.
\newblock Springer-Verlag, 1977.

\bibitem[HM06]{hacon2006boundedness}
Christopher Hacon and James McKernan.
\newblock Boundedness of pluricanonical maps of varieties of general type.
\newblock {\em Inventiones mathematicae}, 166(1):1--25, 2006.

\bibitem[IF00]{iano2000working}
A.R. Iano-Fletcher.
\newblock Working with weighted complete intersections.
\newblock In {\em Explicit birational geometry of 3-folds}, volume 281 of {\em London Mathematical Society Lecture Note Series}, pages 101--173. Cambridge University Press, 2000.

\bibitem[Kaw24]{kawamata2024algebraic}
Yujiro Kawamata.
\newblock {\em Algebraic Varieties: Minimal Models and Finite Generation}, volume 214.
\newblock Cambridge University Press, 2024.

\bibitem[Lac23]{lacini2023boundedness}
Justin Lacini.
\newblock Boundedness of fibers for pluricanonical maps of varieties of general type.
\newblock {\em Mathematische Annalen}, 385(1):693--716, 2023.

\bibitem[Laz04a]{lazarsfeld1}
Robert Lazarsfeld.
\newblock {\em Positivity in algebraic geometry. {I}}, volume~48 of {\em Ergebnisse der Mathematik und ihrer Grenzgebiete. 3. Folge. A Series of Modern Surveys in Mathematics [Results in Mathematics and Related Areas. 3rd Series. A Series of Modern Surveys in Mathematics]}.
\newblock Springer-Verlag, Berlin, 2004.
\newblock Classical setting: line bundles and linear series.

\bibitem[Laz04b]{lazarsfeld2}
Robert Lazarsfeld.
\newblock {\em Positivity in algebraic geometry. {II}}, volume~49 of {\em Ergebnisse der Mathematik und ihrer Grenzgebiete. 3. Folge. A Series of Modern Surveys in Mathematics [Results in Mathematics and Related Areas. 3rd Series. A Series of Modern Surveys in Mathematics]}.
\newblock Springer-Verlag, Berlin, 2004.
\newblock Positivity for vector bundles, and multiplier ideals.

\bibitem[McK02]{mckernan2002boundedness}
James McKernan.
\newblock Boundedness of log terminal fano pairs of bounded index.
\newblock {\em arXiv preprint math/0205214}, 2002.

\bibitem[Tak06]{takayama2006pluricanonical}
Shigeharu Takayama.
\newblock Pluricanonical systems on algebraic varieties of general type.
\newblock {\em Inventiones mathematicae}, 165(3), 2006.

\bibitem[Tod07]{todorov2007pluricanonical}
Gueorgui~Tomov Todorov.
\newblock Pluricanonical maps for threefolds of general type.
\newblock {\em Annales de l'institut Fourier}, 57(4):1315--1330, 2007.

\end{thebibliography}

\end{document}